\documentclass[12pt,a4paper]{article}
\usepackage{latexsym,amsfonts,amsmath,graphics,amsthm}
\usepackage{epsfig}
\usepackage{mathrsfs}
\usepackage{color}

\setlength{\bigskipamount}{5ex plus1.5ex minus 2ex}
\setlength{\textheight}{24cm} \setlength{\textwidth}{16cm}
\setlength{\hoffset}{-1.3cm} \setlength{\voffset}{-1.8cm}

\newtheorem{theorem}{Theorem}
\newtheorem{lemma}{Lemma}
\newtheorem{corollary}{Corollary}
\newtheorem{proposition}{Proposition}


\newcommand{\norm}[1]{\left\Vert#1\right\Vert}
\newcommand{\abs}[1]{\left\vert#1\right\vert}

\newcommand{\To}{\rightarrow}

\newcommand{\satop}[2]{\stackrel{\scriptstyle{#1}}{\scriptstyle{#2}}}

\newcommand{\bsalpha}{\boldsymbol{\alpha}}

\newcommand{\bsgamma}{\boldsymbol{\gamma}}

\newcommand{\bsk}{\boldsymbol{k}}

\newcommand{\bsx}{\boldsymbol{x}}

\newcommand{\bsg}{\boldsymbol{g}}

\newcommand{\bst}{\boldsymbol{t}}
\newcommand{\bsp}{\boldsymbol{p}}

\newcommand{\bsw}{\boldsymbol{w}}

\newcommand{\LL}{{\cal L}}

\newcommand{\bsy}{\boldsymbol{y}}
\newcommand{\bssigma}{\boldsymbol{\sigma}}

\newcommand{\wal}{{\rm wal}}

\newcommand{\icomp}{\mathrm{i}}
\newcommand{\bszero}{\boldsymbol{0}}
\newcommand{\bsone}{\boldsymbol{1}}
\newcommand{\rd}{\,\mathrm{d}}

\newcommand{\NN}{\mathbb{N}}
\newcommand{\ZZ}{\mathbb{Z}}

\newcommand{\EE}{\mathbb{E}}

\newcommand{\FF}{\mathbb{Z}}
\newcommand{\QQ}{\mathbb{Q}}

\newcommand{\RR}{{\mathbb R}}
\newcommand{\CC}{{\mathbb C}}
\newcommand{\cH}{{\mathscr H}}
\newcommand{\cHal}{{\mathcal H}}
\newcommand{\cP}{{\mathcal P}}
\newcommand{\uu}{\mathfrak{u}}
\newcommand{\sh}{\mathrm{sh}}

\makeatletter
\newcommand{\rdots}{\mathinner{\mkern1mu\lower-1\p@\vbox{\kern7\p@\hbox{.}}
\mkern2mu \raise4\p@\hbox{.}\mkern2mu\raise7\p@\hbox{.}\mkern1mu}}
\makeatother

\allowdisplaybreaks

\begin{document}

\title{Open type quasi-Monte Carlo integration based on Halton sequences in weighted Sobolev spaces}

\author{Peter Hellekalek, Peter Kritzer and Friedrich Pillichshammer\thanks{The authors are supported by the
Austrian Science Fund (FWF): Projects F5504-N26 (Hellekalek), F5506-N26 (Kritzer), and F5509-N26 (Pillichshammer),
respectively. All these projects are part of the Special Research Program "Quasi-Monte Carlo Methods: Theory and Applications".}}

\date{}

\maketitle

\begin{abstract}
\noindent In this paper, we study quasi-Monte Carlo (QMC) integration in
weighted Sobolev spaces. In contrast to many previous results
the QMC algorithms considered here are of open type, i.e., they are extensible in
the number of sample points without having to discard the
samples already used. As the underlying integration nodes we consider randomized
Halton sequences in prime bases $\bsp=(p_1,\ldots,p_s)$ for which
we study the root mean square (RMS) worst-case error. The
randomization method is a $\bsp$-adic shift which is based on
$\bsp$-adic arithmetic. 

The obtained error bounds are optimal in the order of magnitude of
the number of sample nodes. Furthermore we obtain
conditions on the coordinate weights under which the error bounds are
independent of the dimension $s$. In terms of the field of Information-Based
Complexity this means that the corresponding QMC rule achieves a
strong polynomial tractability error bound. Our findings on
the RMS worst-case error of randomized Halton sequences can be
carried over to the RMS $L_2$-discrepancy.

Except for the $\bsp$-adic shift our results are fully
constructive and no search algorithms (such as the
component-by-component algorithm) are required.
\end{abstract}

\noindent\textbf{Keywords:} Quasi-Monte Carlo integration of open
type, Halton sequences, worst-case error, $L_2$-discrepancy,
randomized point sets,  Sobolev space, $\bsp$-adic arithmetic.

\noindent\textbf{MSC:} 65D30, 65C05, 11K38, 11K45.


\section{Introduction}\label{secintro}

Point sequences with good distribution properties in a given
domain, as for example the $s$-dimensional unit cube $[0,1)^s$
which is the domain considered in this paper, are of interest in
various branches of mathematics. For instance, in the field of
quasi-Monte Carlo (QMC) integration one requires very well
distributed points in the integration domain as the underlying
nodes for quadrature rules. A QMC rule is an equal-weight
quadrature rule of the form $\frac{1}{N} \sum_{n=0}^{N-1}
f(\bsx_n)$ with points $\bsx_0,\bsx_1,\ldots,\bsx_{N-1} \in
[0,1)^s$ which is used to approximate the integral of a function $f$ over the
$s$-dimensional unit-cube, i.e., $$\int_{[0,1]^s} f(\bsx) \rd \bsx
\approx \frac{1}{N} \sum_{n=0}^{N-1} f(\bsx_n).$$

Often a distinction is made between QMC rules of ``open type'' and
of ``closed type'' (see \cite{DKS13}):
\begin{itemize}
\item An open type QMC rule uses the first $N$ points of an infinite
sequence $\mathcal{S}=(\bsx_n)_{n \ge 0}$ as integration nodes.
Thus to increase $N$, one only needs to evaluate the integrand at the additional
cubature points, and there is no need to discard previous function evaluations.

\item A closed type QMC rule uses a finite point set
$\cP=\{\bsx_0,\bsx_1,\ldots,\bsx_{N-1}\}$, the form of which depends on $N$, as integration nodes.
Thus increasing $N$ usually means that all (or at least some) previous function evaluations are discarded and a
different (larger) point set needs to be used.
\end{itemize}

QMC rules of closed type have been very well studied in the literature, where
common point sets in use are lattice point sets in the sense of
Hlawka~\cite{H62} and Korobov~\cite{K59} or digital nets with
their sub-class of polynomial lattice point sets according to
Niederreiter \cite{N92}. For an overview see, for example,
\cite{DKS13,DP10,LP14,N92,SJ94}.

In this paper we consider QMC rules of open type where we focus on
a special kind of point sequences underlying the QMC rule, namely
Halton sequences (cf.~\cite{H60}). Halton sequences (and their
one-dimensional versions, van-der-Corput sequences), are among the
prototypes of sequences with excellent distribution properties. The definition
of these sequences is based on the radical inverse function. Here
and in the following, we denote by $\NN_0$ the set of nonnegative
integers and by $\NN$ the set of positive integers. The radical
inverse function is defined as follows for an arbitrary integer
$p\ge 2$. For $n\in\NN_0$, let $n=n_0 + n_1 p + n_2 p^2+\cdots$ be
the base $p$ expansion of $n$ (which is of course finite) with
digits $n_i\in\{0,1,\ldots,p-1\}$ for $i\ge 0$. The radical
inverse function $\phi_p:\NN_0\To [0,1)$ in base $p$ is defined by
\[\phi_p (n):=\sum_{k=1}^{\infty} \frac{n_{k-1}}{p^k}.\]
The radical inverse function in base $p$ gives rise to the
definition of Halton sequences. Halton sequences can be defined
for any dimension $s\in\NN$, and for their definition we need $s$
integers. To be more precise, let $p_1,\ldots,p_s$ be $s$
integers, $p_j\ge 2$ for all $j\in [s]:=\{1,\ldots,s\}$, and let
$\bsp=(p_1,\ldots,p_s)$. Then the $s$-dimensional Halton sequence
$\cHal_{\bsp}$ in bases $p_1,\ldots,p_s$ is defined to be the
sequence $\cHal_{\bsp}=(\bsx_n)_{n\ge 0}\subseteq [0,1)^s$, where
\[\bsx_n=(\phi_{p_1}(n),\phi_{p_2}(n),\ldots,\phi_{p_s}(n)),\ n\ge 0.\]
It is well known (see, e.g.,~\cite{DP10,N92}) that Halton
sequences have good distribution properties if and only if the
bases $p_1,\ldots,p_s$ are mutually relatively prime, and for the
sake of simplicity we assume throughout the rest of the paper that
$\bsp=(p_1,\ldots,p_s)$ consists of $s$ mutually different prime
numbers.

In this paper we study integration of functions from a weighted
anchored Sobolev space consisting of functions whose mixed partial
derivatives of order up to one are square integrable. Integration
in this particular Sobolev space has already been studied by many
authors, see, for
instance,~\cite{DP10,KP11,K03,NW08,NW10,NW12,SW98,SW01}. The Sobolev
space can be introduced via a reproducing kernel and will be
presented in Section~\ref{secsobspace}. For a general reproducing
kernel Hilbert space $\cH(K)$ of functions on $[0,1]^s$ with
reproducing kernel $K$ and norm $\| \cdot \|_K$ the worst-case
error of a QMC rule based on the first $N$ points of a sequence $\mathcal{S}=(\bsx_n)_{n
\ge 0}$ is defined as $$e_{N,s}(\mathcal{S},K)=\sup_{\|f\|_K \le
1} \left|\int_{[0,1]^s} f(\bsx) \rd \bsx
-\frac{1}{N}\sum_{n=0}^{N-1} f(\bsx_n)\right|,$$ where the
supremum is extended over all functions $f$ which belong to the
unit ball of $\cH(K)$. It has been shown in
\cite[Proposition~1]{HKKN} that if
\begin{equation}\label{condHickKri}
\inf_{\bst \in [0,1]^s} \sup_{\|f\|_K \le 1} \left|\int_{[0,1]^s}
f(\bsx)\rd \bsx -f(\bst)\right| \ge c_K >0
\end{equation}
for some absolute constant  $c_K$ (which may depend on the kernel function $K$), then for any open type QMC
rule based on a sequence $\mathcal{S}$, the sequence of worst-case errors
$(e_{N,s}(\mathcal{S},K))_{N \ge 1}$ cannot decrease to zero faster than $O(N^{-1})$.\\

The tool to analyze integration by means of Halton sequences in
our Sobolev space is to introduce an orthonormal basis
$B_{\bsp}^s$, where $\bsp=(p_1,\ldots,p_s)$ consists of $s$
mutually different primes, of $L_2 ([0,1]^s)$ that matches the
structural properties of Halton sequences in bases
$p_1,\ldots,p_s$. The function system $B_{\bsp}^s$ is based on
arithmetic over fields of $p_j$-adic ($j\in [s]$) integers and is
referred to as $\bsp$-adic function system; the $\bsp$-adic
function system, which will be introduced in detail in Section~\ref{secprel},
was first studied in~\cite{H09}. In line with the properties of
$B_{\bsp}^s$, we will introduce a way of randomizing Halton
sequences by means of a random $\bsp$-adic shift, which preserves
the structural properties of the sequences. Introducing a random
element in a QMC integration node set facilitates the error
analysis in a Sobolev space as the one considered here. This has
been done previously by using lattice point sets randomized by a
shift modulo one and the trigonometric function system (see,
e.g.,~\cite{K03}), and also by using polynomial lattice points randomized by a digital shift and
the Walsh function system (see, e.g.,~\cite{DP05}). In this
context, we are then able to study the integration error for the
Sobolev space in the sense of a root mean square (RMS) error,
where the mean is considered with respect to the randomization
method.

We show that the RMS worst-case error is of optimal order in the
number $N$ of employed sample nodes for all $N \in \NN$, and we also
study the dependence of the error bounds on the dimension
$s$. We give conditions on the coordinate weights which guarantee that these
error bounds are independent on the dimension $s$. The main result
will be presented in Theorem~\ref{thmerrsob} in
Section~\ref{secsobspace}.

We stress that the present paper is the first paper to address
integration in a Sobolev space by using $\bsp$-adically shifted
Halton sequences and the $\bsp$-adic function system. We furthermore emphasize
that, apart from the randomization, in our approach we explicitly
know the underlying point set, as opposed to the case of lattice
points and polynomial lattice points, which need to be found by a
component-by-component algorithm (see again~\cite{DP05,K03}).
Hence, from the point of view of providing explicitness, our
approach is preferable over using (polynomial) lattice points. The
price we have to pay for this advantage is that so far we have
stricter conditions regarding tractability results (see
Section~\ref{secsobspace} for details on this matter), but we suspect
that this is a purely technical problem.

The rest of the paper is structured as follows. In
Section~\ref{secprel}, we define the $\bsp$-adic function system
$B_{\bsp}^s$. In Section~\ref{secsobspace} we introduce the
Sobolev space $\cH (K_{s,\bsgamma})$ and present the main results
on integration using $\bsp$-adically shifted Halton sequences.
These results will then also yield results on the RMS
$L_2$-discrepancy of $\bsp$-adically shifted Halton sequences which
will be presented in Section~\ref{secRMSL2}. In
Section~\ref{final} we give some comments on QMC integration
in other, but related, reproducing kernel Hilbert spaces.

\paragraph{Notation.} Throughout the paper we use the following notation.
For functions $f,g:\NN \rightarrow \RR^+$ we write $g(N) \ll f(N)$
(or $g(N) \gg f(N)$), if there exists a $C>0$ such that $g(N) \le
C f(N)$ (or $g(N) \ge C f(N)$) for all $N \in \NN$, $N \ge 2$. If
we would like to stress that the quantity $C$ may also depend on
other variables than $N$, say $\alpha_1,\ldots,\alpha_w$, this
will be indicated by writing $\ll_{\alpha_1,\ldots,\alpha_w}$ (or
$\gg_{\alpha_1,\ldots,\alpha_w}$). Furthermore, $[s]$ denotes the
set of coordinate indices, i.e., $[s]=\{1,2,\ldots,s\}$ and $\log$
denotes the natural logarithm. Throughout the paper we use the
notation $\bsp=(p_1,\ldots,p_s)$ to denote a vector of $s$
mutually different prime numbers.

\section{The $\bsp$-adic function system}\label{secprel}

In this section, we define a function system that is based on
$p$-adic (or, in the $s$-dimensional case, $p_1$-, $p_2$-, \ldots,
$p_s$-adic) integer arithmetic and that forms an orthonormal basis
of $L_2([0,1]^s)$. For the sake of simplicity, we restrict
ourselves to the case where $p$ or $p_1,\ldots,p_s$, respectively,
are prime numbers, in the following. However, we remark that large
parts of the theory explained below also work if we simply assume
that the bases $p_1,\ldots,p_s$ are pairwise co-prime.

Let $p$ be a prime number. We define the set of $p$-adic numbers
as the set of formal sums
\begin{equation*}
\mathbb{Z}_p = \left\{z = \sum_{r=0}^\infty z_r p^r\, : \, z_r \in
\{0,1,\ldots,p-1\} \mbox{ for all } r \in \NN_0\right\}.
\end{equation*}
The set of nonnegative integers $\mathbb{N}_0$ is a subset of
$\mathbb{Z}_p$. For two nonnegative integers $y, z \in
\mathbb{N}_0 \subseteq \mathbb{Z}_p$, the sum $y+z \in
\mathbb{Z}_p$ is defined as the usual sum of integers. The
addition can be extended to all $p$-adic numbers.
The set $\mathbb{Z}_p$ with this addition then forms an abelian
group. For instance,
the inverse of $1 \in \mathbb{Z}_p$ is given by the formal sum
\begin{equation*}
(p-1) + (p-1) p + (p-1) p^2 + \cdots.
\end{equation*}
We then have
\begin{align*}
1 + [(p-1) + (p-1) p + (p-1) p^2 + \cdots ]  = & 0 + (1 + (p-1)) p
+ (p-1) b^2 + \cdots \\  = & 0 + 0 p + (1 + (p-1)) p^2 + \cdots \\
= & 0 p + 0 p^2 + \cdots = 0.
\end{align*}

As an extension of the radical inverse function defined in
Section~\ref{secintro}, we define the so-called Monna map
\begin{equation*}
\phi_p:\mathbb{Z}_p \to [0,1)\ \ \mbox{by}\ \ \phi_p(z): =
\sum_{r=0}^\infty \frac{z_r}{p^{r+1}} \pmod{1}
\end{equation*}
whose restriction to $\NN_0$ is exactly the radical inverse
function in base $p$. We also define the inverse
\begin{equation*}
\phi_p^+: [0,1)\to \mathbb{Z}_p\ \ \mbox{by}\ \
\phi_p^+\left(\sum_{r=0}^\infty \frac{x_r}{p^{r+1}}\right) :=
\sum_{r=0}^\infty x_r p^r,
\end{equation*}
where we always use the finite $p$-adic representation for
$p$-adic rationals in $[0,1)$. In this context we say that $x\in
[0,1)$ is a $p$-adic rational if it can be represented by a finite
$p$-adic representation. More precisely, the $p$-adic
rationals in $[0,1)$ are the set $\QQ(p^{\infty}) =\bigcup_{r \ge
0} \QQ(p^r)$, where $\QQ(p^r)=\{m p^{-r}\ :\ m \in
\{0,1,\ldots,p^r-1\}\}$. Elements of $[0,1)\setminus
\QQ(p^{\infty})$ are called $p$-adic irrationals. Note that the
$p$-adic rationals are a set of measure zero.

For $k \in \mathbb{N}_0$ we can define characters of
$\mathbb{Z}_p$ by
\begin{equation*}
\,_p\chi_k:\mathbb{Z}_p \to \{c \in \mathbb{C}: |c| = 1\},\ \
\mbox{ where }\ \ \,_p\chi_k(z) = \mathrm{e}^{2\pi \mathrm{i}
\phi_p(k) z},
\end{equation*}
where $\mathrm{i}=\sqrt{-1}$. It is easily checked that these
functions satisfy $\,_p\chi_k(y+z) = \,_p\chi_k(y) \,_p\chi_k(z)$,
$\,_p\chi_k(0) = 1$, $\,_p\chi_0(z) = 1$, $\,_p\chi_k(z)
\,_p\chi_l(z) = \,_p\chi_{\phi_p^+(\phi_p(k) + \phi_p(l) \pmod{1})}(z)$.

Furthermore, we define
\begin{equation*}
\,_p\beta_k: [0,1) \to \{c \in \mathbb{C}: |c| = 1\},\ \
\mbox{where}\ \ \,_p\beta_k(x) = \,_p\chi_k(\phi_p^+(x)).
\end{equation*}
It is easy to verify that we have $\,_p\beta_k(x) \,_p\beta_l(x) =
\,_p\beta_{\phi_p^+(\phi_p(k) + \phi_p(l)\pmod{1})}(x)$ and
$\overline{\,_p\beta_k(x)}=\,_p\beta_{\phi_p^+(-\phi_p(k)\pmod{1})}(x)$,
where $\overline{z}$ denotes the complex conjugate of $z \in \CC$.

For $\bsp=(p_1,\ldots,p_s)$ the $s$-dimensional version
$\,_{\bsp}\beta$ is defined as follows: for $\bsk=(k_1,\ldots,k_s)
\in \NN_0^s$ and for $\bsx=(x_1,\ldots,x_s) \in [0,1)^s$ define
$$\,_{\bsp}\beta_{\bsk}(\bsx):=\prod_{j=1}^s \,_{p_j}\beta_{k_j}(x_j).$$
From now on we suppress the dependence of $\chi$ and $\beta$ on
$p$ and $\bsp$, respectively and we simply write $\chi_k$,
$\beta_k$ and $\beta_{\bsk}$, respectively. The choice of $p$ and
$\bsp$ will be clear from the context.

The following proposition states that the system of the functions
$\beta_{\bsk}$, $\bsk\in\NN_0^s$, is an orthonormal basis of
$L_2([0,1]^s)$. For a proof of this property we refer to
\cite[Corollary~3.10]{hel2010}.
\begin{proposition}[ONB property]\label{ONB_prop}
The system $B_{\bsp}^{(s)}:=\{\beta_{\bsk}\, : \, \bsk \in
\NN_0^s\}$ is an orthonormal basis of $L_2([0,1]^s)$.
\end{proposition}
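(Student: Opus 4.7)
The plan is to first prove the one-dimensional version---that for each prime $p$ the system $\{{}_p\beta_k : k \in \NN_0\}$ is an orthonormal basis of $L_2([0,1])$---and then obtain the $s$-dimensional statement as a standard tensor product.

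The key one-dimensional tool I would establish first is a structural lemma: if $k$ has base-$p$ expansion $k = k_0 + k_1 p + \cdots + k_{L-1}p^{L-1}$ with $k_{L-1}\neq 0$, then ${}_p\beta_k(x) = \chi_k(\phi_p^+(x))$ depends only on the first $L$ digits of the $p$-adic expansion of $\phi_p^+(x)$. Consequently ${}_p\beta_k$ is constant, and equal to a $p^L$-th root of unity, on each interval $[j/p^L,(j+1)/p^L)$ for $j=0,\ldots,p^L-1$, outside of the measure-zero set of $p$-adic rationals. Orthonormality then follows from the multiplication identity already recorded in Section~\ref{secprel}, namely $\beta_k(x)\overline{\beta_l(x)}=\beta_m(x)$ for $m=\phi_p^+((\phi_p(k)-\phi_p(l))\bmod 1)$, which reduces matters to showing $\int_0^1 \beta_m(x)\rd x = \delta_{m,0}$. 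Since $\phi_p$ is injective on $\NN_0$, we have $m=0$ iff $k=l$; and for $m\neq 0$ a direct finite geometric-sum calculation yields
\[
\int_0^1 \beta_m(x)\rd x = \frac{1}{p^L}\sum_{j=0}^{p^L-1}\mathrm{e}^{2\pi\mathrm{i}\, j\,\phi_p(m)} = 0.
\]

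For completeness I would use a dimension count. Let $V_L\subset L_2([0,1])$ be the $p^L$-dimensional space of functions that are constant on each interval of length $p^{-L}$. The structural lemma places $\beta_0,\ldots,\beta_{p^L-1}$ inside $V_L$; by orthonormality they are linearly independent and therefore span $V_L$. Since $\bigcup_{L\ge 0}V_L$ is dense in $L_2([0,1])$, the linear span of $\{\beta_k : k\in\NN_0\}$ is dense and the orthonormal system is complete. The $s$-dimensional statement then follows at once from the factorization $\beta_{\bsk}(\bsx)=\prod_{j=1}^s {}_{p_j}\beta_{k_j}(x_j)$, because a tensor product of orthonormal bases is an orthonormal basis of the tensor product space $L_2([0,1]^s)\cong\bigotimes_{j=1}^s L_2([0,1])$.

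The main obstacle is the careful bookkeeping of $p$-adic arithmetic, in particular verifying the multiplication formula in the presence of carries and checking that the map $k\mapsto\phi_p^+(\phi_p(k)\bmod 1)$ restricts to a bijection of $\NN_0$ so that the reduction in the orthogonality computation lands on a genuine index of the system. Once these combinatorial details are settled, the remaining integrals are elementary finite geometric sums and completeness is essentially a counting of dimensions. A conceptually cleaner alternative would be to identify $\phi_p^+$ as a measure isomorphism between $[0,1)$ and $\ZZ_p$ (up to measure-zero sets) and invoke Pontryagin duality, since $\{\chi_k : k\in\NN_0\}$ is precisely the full character group of $\ZZ_p$; but this variant relies on more background machinery than the direct approach sketched above.
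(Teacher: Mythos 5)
Your argument is correct. Note, however, that the paper does not prove Proposition~\ref{ONB_prop} at all: it simply cites \cite[Corollary~3.10]{hel2010}, so there is no in-paper proof to compare against. Your proof is a valid self-contained substitute. The structural lemma (that $\beta_k$ for $k<p^L$ is constant on the elementary intervals $[jp^{-L},(j+1)p^{-L})$, being determined by the first $L$ digits of $\phi_p^+(x)$) is sound, the reduction of orthogonality to $\int_0^1\beta_m=\delta_{m,0}$ via the stated multiplication and conjugation identities is legitimate (the index $m=\phi_p^+((\phi_p(k)-\phi_p(l))\bmod 1)$ is indeed a nonnegative integer, since differences of $p$-adic rationals in $[0,1)$ are again $p$-adic rationals with finite expansion, and $m=0$ iff $k=l$ by injectivity of $\phi_p$ on $\NN_0$), the geometric-sum evaluation vanishes because $p^L\phi_p(m)\in\ZZ$ while $\phi_p(m)\notin\ZZ$ for $m\neq 0$, and the dimension count $\dim V_L=p^L$ together with density of $\bigcup_L V_L$ gives completeness; the tensor-product step is standard. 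This is essentially the classical Walsh--Chrestenson-type argument adapted to the $p$-adic system, whereas the cited reference works within the harmonic-analysis framework on $\ZZ_p$ (your ``conceptually cleaner alternative'' via the identification of $\{\chi_k\}$ with the dual group of $\ZZ_p$ and of $\phi_p^+$ with a measure isomorphism is closer in spirit to that source). One cosmetic remark: the exception for the measure-zero set of $p$-adic rationals in your structural lemma is unnecessary, since with the paper's convention of finite expansions for $p$-adic rationals the function $\beta_k$ is constant on the entire half-open interval including its left endpoint; but leaving the exception in place does no harm for the $L_2$ statement.
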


We refer to \cite{H09,hel2010,helnie} for more information on
$\bsp$-adic functions.

\section{Open type quasi-Monte Carlo rules based on randomized Halton sequences}\label{secsobspace}

In the following we study QMC integration in a weighted anchored
Sobolev space.

\subsection{The weighted anchored Sobolev space}

We study numerical integration in the weighted anchored Sobolev
space $\cH (K_{s,\bsgamma})$ with anchor $\bsone=(1,1,\ldots,1)$
consisting of functions on $[0,1]^s$ whose first mixed partial
derivatives are square integrable (likewise one could consider any
anchor $\bsw \in [0,1]^s$ which would lead to similar results as
the ones we will obtain in the following, see
Section~\ref{final}). This space is a reproducing kernel Hilbert
space (see \cite{A50} for general information on reproducing
kernel Hilbert spaces) with kernel function
\begin{equation}\label{ker_sob_space}
K_{s,\bsgamma}(\bsx,\bsy)=\prod_{j=1}^s (1+\gamma_j \min
(1-x_j,1-y_j)) \ \ \mbox{ for } \bsx,\bsy \in [0,1]^s,
\end{equation}
where $\bsx=(x_1,x_2,\ldots,x_s)$, $\bsy=(y_1,y_2,\ldots,y_s)$ and
where $\bsgamma=(\gamma_j)_{j \ge 1}$ is a sequence of weights
$\gamma_j \in \RR^+$ which model the influence of the coordinate
$j$ on the integrands\footnote{The idea of weighted function
spaces was introduced by Sloan and Wo\'{z}niakowski \cite{SW98},
in order to explain the success of quasi-Monte Carlo algorithms in
practical applications.}. The inner product is given by
\[\langle f,g \rangle_{K_{s,\bsgamma}}=\sum_{\uu\subseteq [s]}\gamma_{\uu}^{-1} \int_{[0,1]^{\abs{\uu}}}
  \frac{\partial^{\abs{\uu}}}{\partial \bsx_{\uu}} f(\bsx_{\uu},\bsone)\frac{\partial^{\abs{\uu}}}{\partial \bsx_{\uu}}
g(\bsx_{\uu},\bsone)\rd \bsx_{\uu}.\] Here
$\gamma_{\uu}=\prod_{j\in\uu}\gamma_j$; in particular
$\gamma_{\emptyset}=1$. Furthermore, we denote by
$f(\bsx_{\uu},\bsone)$ the value of $f$ at the point where all
components $x_j$ of $\bsx\in [0,1]^s$ for which $j\not\in \uu$ are
replaced by 1, and $\frac{\partial^{\abs{\uu}}}{\partial
\bsx_{\uu}} h$ denotes the derivative of a
function $h$ with respect to the $x_j$ with $j\in\uu$. The norm in
$\cH (K_{s,\bsgamma})$ is given by
$\norm{f}_{K_{s,\bsgamma}}=\sqrt{\langle
f,f\rangle_{K_{s,\bsgamma}}}$. The Sobolev space $\cH
(K_{s,\bsgamma})$ has been studied frequently in the literature
(see, among many references,
e.g.~\cite{DKPS05, DP05, KP11, K03, NW08, SW98, wang}).\\

It is well known that the worst-case error for QMC integration in
$\cH (K_{s,\bsgamma})$ is exactly the weighted $L_2$-discrepancy
of the underlying point set or sequence (this was first shown in
\cite{SW98}; see also \cite{DP14a}), i.e.,
\begin{equation}\label{errL2disc}
e_{N,s}(\mathcal{S},K_{s,\bsgamma})=L_{2,N,\bsgamma}
(\mathcal{S}),
\end{equation}
where $L_{2,N,\bsgamma}$ is the weighted $L_2$-discrepancy of the
sequence $\mathcal{S}$, which is given by
\begin{eqnarray*}
L_{2,N,\bsgamma}({\mathcal S}) = \left(\sum_{\emptyset \not= \uu
\subseteq [s]} \bsgamma_{\uu}\int_{[0,1]^{|\uu|}} |
\Delta_N(\bst_{\uu},\bsone)|^2 \rd \bst_{\uu}\right)^{1/2}.
\end{eqnarray*}
Here $\Delta_N$ is the so-called local discrepancy of
$\mathcal{S}$, given by
\begin{equation*}
\Delta_N(\bst) = \frac{1}{N} \sum_{n=1}^N
1_{[\bszero,\bst)}(\bsx_n) - \prod_{i=1}^s t_i,
\end{equation*}
where $\bst = (t_1,\ldots, t_s)\in [0,1]^s$ and $1_{[\bszero,\bst)}$ denotes
the characteristic function of the interval
$[\bszero,\bst)=[0,t_1)\times [0,t_2)\times \ldots \times
[0,t_s)$, i.e., $1_{[\bszero,\bst)}(\bsx)$ equals one if $\bsx$ belongs
to $[\bszero,\bst)$ and zero otherwise.

Hence, all results that will be derived below regarding the worst-case error of integration in
$\cH (K_{s,\bsgamma})$ using Halton sequences carry over to the weighted $L_2$-discrepancy.\\

From \eqref{errL2disc} it follows that
\begin{eqnarray*}
\inf_{\bst \in [0,1]^s} \sup_{\|f\|_{K_{s,\bsgamma}} \le 1} \left|\int_{[0,1]^s} f(\bsx)\rd \bsx -f(\bst)\right|
& = &  \inf_{\bst \in [0,1]^s} L_{2,1,\bsgamma}(\{\bst\})\\
& = &  \inf_{\bst \in [0,1]^s}  \left(\sum_{\emptyset \not= \uu \subseteq [s]}
\gamma_{\uu} (L_{2,1,\bsgamma}(\{\bst_{\uu}\}))^2\right)^{1/2}\\
& \ge & \inf_{0 \le t \le 1} (\gamma_1 L_{2,1,\gamma_1}(\{t\}))^{1/2} \\
& = & \sqrt{\gamma_1/12} >0,
\end{eqnarray*}
where for the second equality we refer to \cite{DP14a} and where the last equality is
easily proved with the help of Warnock's formula for the $L_2$-discrepancy (see, for example, \cite{DP10,DP14a}).
Hence condition  \eqref{condHickKri} is satisfied and this implies that the sequence
$(e_{N,s}(\mathcal{S},K_{s,\bsgamma}))_{N \ge 1}$ cannot converge to zero faster than $O(N^{-1})$.\\

QMC integration in $\cH (K_{s,\bsgamma})$ based on the Halton
sequence $\cHal_{\bsp}$ has already been studied by
Wang~\cite{wang}. He proved that if the weights $\bsgamma$ satisfy
\begin{equation}\label{weightcondwang}
\sum_{j=1}^{\infty} \gamma_j^{1/2} j \log j < \infty,
\end{equation}
then for any $\delta>0$ and for all $N \in \NN$ we have
$$e_{N,s}(\cHal_{\bsp},K_{s,\bsgamma}) \ll_{\delta}
\frac{1}{N^{1-\delta}},$$ where the implied constant depends only
on $\delta>0$, but not on $s$ and $N$ (see
\cite[Theorem~5]{wang}). In the terms of Information Based
Complexity (see \cite{NW08,NW10,NW12}) this means that the
corresponding QMC rule achieves a strong polynomial
tractability error bound and the $\varepsilon$-exponent is equal
to one (which is optimal).

\subsection{$\bsp$-adic shifts and $\bsp$-adic shift invariant kernels}

We introduce a method of randomizing the integration nodes in use which
is referred to as a $\bsp$-adic shift. This special case of
randomization fits perfectly to the definition of the Halton
sequence $\cHal_{\bsp}$. We point out already here that if
$\bsp$-adic shifts and Halton sequences are used in conjunction
with each other they are always meant with respect to the same
bases $\bsp$.

For given bases $\bsp=(p_1,\ldots,p_s)$, we define a $\bsp$-adic
shift as follows. For a point $\bsx=(x_1,\ldots,x_s)\in [0,1)^s$,
and a given $\bssigma=(\sigma_1,\ldots,\sigma_s)\in [0,1)^s$, we
define $\bsx\oplus_{\bsp}\bssigma\in [0,1)^s$ to be
\[\bsx\oplus_{\bsp}\bssigma=(x_1\oplus_{p_1}\sigma_1,\ldots,x_s\oplus_{p_s}\sigma_s),\]
where $x_j\oplus_{p_j}\sigma_j=\phi_{p_j} (\phi_{p_j}^+ (x_j) +
\phi_{p_j}^+ (\sigma_j))$. Note that here $\phi_{p_j}^+ (x_j) +
\phi_{p_j}^+ (\sigma_j)$ means addition in $\ZZ_{p_j}$. For a
Halton sequence $\cHal_{\bsp}=(\bsx_n)_{n\ge 0}$ in bases
$\bsp=(p_1,\ldots,p_s)$, and fixed $\bssigma\in [0,1)^s$, we
denote by $\cHal_{\bsp,\bssigma}$ the sequence
$(\bsx_n\oplus_{\bsp}\bssigma)_{n\ge 0}$.

In the following we are going to study the mean square worst-case
error of QMC integration in $\cH (K_{s,\bsgamma})$ by randomly
$\bsp$-adically shifted Halton sequences, where the mean is taken
with respect to a random shift $\bssigma$. I.e., we study
\[\EE_{\bssigma} [e_{N,s}^2 (\cHal_{\bsp,\bssigma},K_{s,\bsgamma})],\]
where $e_{N,s}^2 (\cHal_{\bsp,\bssigma},K_{s,\bsgamma})$ denotes
the squared worst-case error of integration using a QMC rule based
on the first $N$ points of $\cHal_{\bsp,\bssigma}$, and where
$\EE_{\bssigma} $ denotes expectation with respect to a
$\bsp$-adic shift $\bssigma=(\sigma_1,\ldots,\sigma_s)$, where the
$\sigma_j$ are independent and uniformly distributed in $[0,1)$.

The mean square error can then be analyzed using the so-called
$\bsp$-adic shift-invariant kernel $K_{\sh}$ of the space $\cH
(K_{s,\bsgamma})$, which is defined by
\begin{equation}\label{eqdefksh}
K_{\sh}(\bsx,\bsy)=\int_{[0,1]^s} K_{s,\bsgamma}(\bsx\oplus_{\bsp} \bssigma,\bsy\oplus_{\bsp}\bssigma)\rd\bssigma.
\end{equation}

The relevance of the $\bsp$-adic shift invariant kernel is visible
in the following formula, which follows by using standard methods
for the worst-case error of integration in reproducing kernel
Hilbert spaces as in~\cite[Theorem~12.4]{DP05} or in \cite{HickWo}
\begin{equation}\label{eqshifterror}
 \EE_{\bssigma}[e_{N,s}^2 (\cHal_{\bsp,\bssigma},K_{s,\bsgamma})]=
 e_{N,s}^2 (\cHal_{\bsp},K_{\sh}),
\end{equation}
where $e_{N,s}^2 (\cHal_{\bsp},K_{\sh})$ is the worst-case error
of a QMC rule using the unshifted Halton sequence $\cHal_{\bsp}$
in the reproducing kernel Hilbert space with reproducing kernel
$K_{\sh}$. It is well known (see, for example, \cite{SW98} or
\cite[Proposition~2.11]{DP10}) that for every sequence
$\mathcal{S}=(\bsx_n)_{n \ge 0}\subseteq [0,1)^s$ and every reproducing kernel $K$
on $[0,1]^{2s}$ we have
\begin{eqnarray}\label{fr_wc_err}
e_{N,s}^2(\mathcal{S},K) & = & \int_{[0,1]^s}\int_{[0,1]^s} K(\bsx,\bsy) \rd \bsx \rd \bsy
-\frac{2}{N}\sum_{n=0}^{N-1} \int_{[0,1]^s} K(\bsx_n,\bsy)\rd \bsy\nonumber\\
& & + \frac{1}{N^2} \sum_{n,m=0}^{N-1} K(\bsx_m,\bsx_n).
\end{eqnarray}

This means that we have to compute the kernel $K_{\sh}$ defined in~\eqref{eqdefksh}.
The following proposition shows that the
$\bsp$-adic shift invariant kernel can be represented in terms of
$\bsp$-adic functions

\begin{proposition}\label{fo_shinv_kernel_a}
Let $K_{s,\bsgamma}$ be the reproducing kernel given by
\eqref{ker_sob_space}. Then the corresponding $\bsp$-adic shift
invariant kernel is given by
\begin{eqnarray}\label{ds_kernel}
K_{\sh}(\bsx,\bsy) =  \sum_{\bsk \in \NN_0^s} r_{\bsp
,\bsgamma}(\bsk)  \beta_{\bsk}(\bsx)
\overline{\beta_{\bsk}(\bsy)},
\end{eqnarray}
where for $\bsk=(k_1,\ldots,k_s)$ we put $r_{\bsp ,\bsgamma}(\bsk)
= \prod_{j=1}^s r_{p_j,\gamma_j}(k_j)$, and for $k = \kappa_{a-1}
p^{a-1} + \cdots + \kappa_1 p + \kappa_0$ with $\kappa_{a-1} \neq
0$, we put
\[
  r_{p,\gamma}(k)
= \left\{ \begin{array}{ll}
        1+\frac{\gamma}{3} & \mbox{ if } k = 0, \\
        \frac{\gamma}{2p^{2a}}\left(\frac{1}{\sin^2(\kappa_{a-1}\pi/p)} - \frac{1}{3} \right) & \mbox{ if } k > 0.
        \end{array}\right.
\]
\end{proposition}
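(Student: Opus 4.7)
The proof naturally splits into three stages: reducing to one dimension, establishing that the expansion is diagonal in the $\beta$-basis, and then computing the diagonal coefficients $r_{p,\gamma}(k)$.

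\textbf{Reduction and diagonalization.} Since $K_{s,\bsgamma}$ is a product of one-dimensional factors and the components $\sigma_1,\ldots,\sigma_s$ of $\bssigma$ are independent, the shift-invariant kernel factorizes as
\[
K_{\sh}(\bsx,\bsy)=\prod_{j=1}^s K_{\sh}^{(1)}_{p_j,\gamma_j}(x_j,y_j),\qquad K_{\sh}^{(1)}_{p,\gamma}(x,y)=\int_0^1 \bigl(1+\gamma\min(1-x\oplus_p\sigma,1-y\oplus_p\sigma)\bigr)\rd\sigma.
\]
It therefore suffices to work in one dimension. The key structural fact is the character identity $\beta_k(x\oplus_p\sigma)=\beta_k(x)\beta_k(\sigma)$, which follows directly from the definition of $\beta_k$ via $\chi_k$ and the additive homomorphism property of $\chi_k$ on $\ZZ_p$. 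Expanding $K_{s,\gamma}^{(1)}(x,y)=1+\gamma\min(1-x,1-y)$ formally in the orthonormal basis $\{\beta_k\otimes\overline{\beta_l}\}$ of $L_2([0,1]^2)$ and applying this identity inside the integral over $\sigma$, the off-diagonal coefficients vanish by orthonormality. Thus the expansion is diagonal, with coefficients
\[
r_{p,\gamma}(k)=\int_0^1\!\!\int_0^1 \bigl(1+\gamma\min(1-x,1-y)\bigr)\,\overline{\beta_k(x)}\beta_k(y)\rd x\rd y.
\]

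\textbf{Computing the coefficients for $k=0$ and the reduction for $k>0$.} Since $\beta_0\equiv 1$, the case $k=0$ follows from the elementary identity $\int_0^1\!\!\int_0^1\min(1-x,1-y)\rd x\rd y=1/3$, giving $r_{p,\gamma}(0)=1+\gamma/3$. For $k>0$, the constant term integrates to zero and we use the indicator representation $\min(1-x,1-y)=\int_0^1 \mathbf{1}_{[0,1-u)}(x)\mathbf{1}_{[0,1-u)}(y)\rd u$ to obtain, after a change of variable,
\[
r_{p,\gamma}(k)=\gamma\int_0^1 |F_k(t)|^2\rd t,\qquad F_k(t):=\int_0^t\beta_k(x)\rd x.
\]

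\textbf{Evaluating $\int_0^1|F_k(t)|^2\rd t$.} Let $a$ be such that $p^{a-1}\le k<p^a$, so that $\beta_k$ is constant on each dyadic-style interval $J_m=[mp^{-a},(m+1)p^{-a})$, with value $c_m:=\beta_k(mp^{-a})$. Writing $A_m:=F_k(mp^{-a})=p^{-a}\sum_{m'<m}c_{m'}$ and using that $F_k$ is affine on $J_m$, an elementary integration yields
\[
\int_0^1 |F_k(t)|^2\rd t = p^{-a}\sum_{m=0}^{p^a-1}|A_m|^2 \;+\; p^{-2a}\sum_{m=0}^{p^a-1}\mathrm{Re}(\overline{A_m}c_m) \;+\; \tfrac{1}{3}p^{-2a}.
\]
Using $\sum_m c_m=p^a F_k(1)=0$ (since $\int_0^1\beta_k=0$), the middle sum simplifies to $-1/2$ by a standard algebraic manipulation. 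The first sum is the delicate one: $|A_m|^2=p^{-2a}|S_m|^2$ with $S_m=\sum_{m'<m}c_{m'}$, and the values $c_m$ are roots of unity whose structure is governed by the base-$p$ digits of $k$. Since $k=\kappa_{a-1}p^{a-1}+k'$ with $k'<p^{a-1}$, the characters factor as $\beta_k=\beta_{\kappa_{a-1}p^{a-1}}\cdot\beta_{k'}$ (the digits of the summands do not interact), which, together with the recursive self-similar structure of the $c_m$ over blocks of length $p$, reduces $\sum_m|S_m|^2$ to the geometric sum $\sum_{r=0}^{p-1}\bigl|\frac{1-\omega^{\kappa_{a-1}r}}{1-\omega^{\kappa_{a-1}}}\bigr|^2$ with $\omega=e^{2\pi\icomp/p}$; this evaluates to $p/(2\sin^2(\kappa_{a-1}\pi/p))$ and produces the claimed $1/\sin^2(\kappa_{a-1}\pi/p)$ factor after accounting for the $p^a$ scaling. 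Assembling the three contributions gives the stated formula.

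\textbf{Main obstacle.} The technical heart of the argument is the evaluation of $\sum_{m=0}^{p^a-1}|S_m|^2$. The factorization $\beta_k=\beta_{\kappa_{a-1}p^{a-1}}\cdot\beta_{k'}$ and the block-periodic structure of the characters $c_m$ on sub-blocks of length $p^{a-1}$ are what allow the entire lower-order digit contribution $k'$ to drop out, leaving only the leading digit $\kappa_{a-1}$ in the final formula; verifying this cleanly requires careful bookkeeping of $p$-adic digit expansions.
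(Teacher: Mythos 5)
Your proof is correct, and for $k>0$ it evaluates the coefficients by a genuinely different route from the paper's. Both arguments share the same first stage: reduce to one dimension by the product structure, use the identity $\beta_k(x\oplus_p\sigma)=\beta_k(x)\beta_k(\sigma)$ together with orthonormality of the $\beta_k$ to see that the shift-invariant kernel is diagonal, and compute $r_{p,\gamma}(0)=1+\gamma/3$ directly. For $k>0$ the paper writes $\min(1-x,1-y)=1-\tfrac12(x+y+|x-y|)$ and evaluates $\int_0^1\int_0^1|x-y|\,\overline{\beta_k(x)}\beta_k(y)\,\mathrm{d}x\,\mathrm{d}y$ by decomposing $[0,1)^2$ into $p^a\times p^a$ cells on which $\beta_k$ is constant, followed by a two-index cancellation argument (summing over the last digits of $u$ and $v$ annihilates all blocks except those sharing the higher digits) before a geometric series appears. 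You instead use the rank-one representation $\min(1-x,1-y)=\int_0^1\mathbf{1}_{[0,1-u)}(x)\mathbf{1}_{[0,1-u)}(y)\,\mathrm{d}u$, which collapses the double integral to $\gamma\int_0^1|F_k(t)|^2\,\mathrm{d}t$ with $F_k$ the antiderivative of $\beta_k$; the piecewise-affine structure of $F_k$, the vanishing of $\sum_m c_m$, and the vanishing of the partial sums $S_{qp}$ at block boundaries then reduce everything to the single sum $\sum_m|S_m|^2=p^{a-1}\sum_{r=0}^{p-1}\bigl|(1-\omega^{\kappa_{a-1}r})/(1-\omega^{\kappa_{a-1}})\bigr|^2$. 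I checked the arithmetic (the cross term contributes $-\tfrac12 p^{-2a}$, the geometric sum equals $p/(2\sin^2(\kappa_{a-1}\pi/p))$), and it assembles to the stated formula. Your route buys two things: the positivity of $r_{p,\gamma}(k)$ is manifest, since it is $\gamma$ times an $L_2$-norm, and the paper's two-index cancellation bookkeeping is replaced by a one-index partial-sum computation. The one place where you are more casual than the paper is the diagonalization step: the paper proves the pointwise (not merely almost-everywhere) identity \eqref{ds_kernel} via a continuity argument (continuity from above at $p$-adic rationals plus uniform convergence of the $\beta$-series, its Lemmas~\ref{lemabsconv} and~\ref{lemshikernel}); since $\sum_k r_{p,\gamma}(k)<\infty$ your series does converge absolutely and uniformly, but to conclude that its sum equals $K_{\sh}$ everywhere rather than almost everywhere you would still need that continuity step.
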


The proof of Proposition~\ref{fo_shinv_kernel_a} is deferred to
the Appendix.

\subsection{The mean square worst-case error of $\bsp$-adically shifted Halton sequences}\label{secuppersob}

Let
\[\widehat{e}_{N,s}(\cHal_{\bsp},K_{s,\bsgamma}):=\sqrt{\EE_{\bssigma} [e^2_{N,s}(\cHal_{\bsp,\bssigma},K_{s,\bsgamma})]}\]
denote the root mean square (RMS) worst-case error of QMC integration using the first
$N$ points of the randomly $\bsp$-adically shifted Halton sequence
$\cHal_{\bsp}$.

Combining Equations~\eqref{eqshifterror} and \eqref{fr_wc_err} and
Proposition~\ref{fo_shinv_kernel_a}, and by some straightforward calculations, we obtain the following
result.
\begin{proposition}\label{propehat}
We have
\begin{equation*}\label{eqehat}
 [\widehat{e}_{N,s}(\cHal_{\bsp},K_{s,\bsgamma})]^2= \sum_{\bsk\in\NN_0^s\setminus\{\bszero\}}r_{\bsp,\bsgamma}(\bsk) \left|\frac{1}{N}\sum_{n=0}^{N-1}
 \beta_{\bsk}(\bsx_n)\right|^2,
\end{equation*}
where the $\bsx_n$ are the points of the unshifted Halton sequence
$\cHal_{\bsp}$, and where $r_{\bsp,\bsgamma}$ is defined as in
Proposition~\ref{fo_shinv_kernel_a}.
\end{proposition}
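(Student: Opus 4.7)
The plan is to combine the three ingredients that are already on the table: the expectation identity \eqref{eqshifterror}, the general worst-case error formula \eqref{fr_wc_err} applied to the $\bsp$-adic shift invariant kernel $K_{\sh}$, and the series expansion of $K_{\sh}$ provided by Proposition~\ref{fo_shinv_kernel_a}. Once these are assembled, the identity reduces to a clean cancellation among the three terms produced by \eqref{fr_wc_err}.

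Concretely, I would first invoke \eqref{eqshifterror} to rewrite
$[\widehat{e}_{N,s}(\cHal_{\bsp},K_{s,\bsgamma})]^2 = e_{N,s}^2(\cHal_{\bsp},K_{\sh})$, and then apply \eqref{fr_wc_err} to the kernel $K_{\sh}$. I would then substitute the series \eqref{ds_kernel} into each of the three terms and interchange summation and integration (resp.\ finite summation). The key input for the simplification is the orthonormality of the system $B_{\bsp}^{(s)}$ established in Proposition~\ref{ONB_prop}: since $\beta_{\bszero}\equiv 1$, orthogonality gives
\[
\int_{[0,1]^s}\beta_{\bsk}(\bsx)\rd \bsx = \delta_{\bsk,\bszero}, \qquad \bsk\in\NN_0^s.
\]
Consequently the double integral $\int\!\!\int K_{\sh}(\bsx,\bsy)\rd \bsx \rd \bsy$ collapses to $r_{\bsp,\bsgamma}(\bszero)$; each single integral $\int K_{\sh}(\bsx_n,\bsy)\rd\bsy$ equals $r_{\bsp,\bsgamma}(\bszero)\,\beta_{\bszero}(\bsx_n)=r_{\bsp,\bsgamma}(\bszero)$, so the middle term of \eqref{fr_wc_err} becomes $2r_{\bsp,\bsgamma}(\bszero)$; and the double sum produces
\[
\frac{1}{N^2}\sum_{m,n=0}^{N-1}K_{\sh}(\bsx_m,\bsx_n) = \sum_{\bsk\in\NN_0^s} r_{\bsp,\bsgamma}(\bsk) \left|\frac{1}{N}\sum_{n=0}^{N-1}\beta_{\bsk}(\bsx_n)\right|^2,
\]
whose $\bsk=\bszero$ contribution equals $r_{\bsp,\bsgamma}(\bszero)$. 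Adding the three terms, the $\bszero$-contributions cancel ($r_{\bsp,\bsgamma}(\bszero)-2r_{\bsp,\bsgamma}(\bszero)+r_{\bsp,\bsgamma}(\bszero)=0$) and only the sum over $\bsk\neq\bszero$ survives, which is exactly the claimed expression.

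The only non-cosmetic issue is justifying the interchange of the infinite sum defining $K_{\sh}$ with the (double) integral and with the finite summation over $m,n$. This is harmless: from the explicit formula for $r_{p,\gamma}$ in Proposition~\ref{fo_shinv_kernel_a} one sees that, for $p^{a-1}\le k<p^{a}$, $r_{p,\gamma}(k)=O(p^{-2a})$ uniformly, so $\sum_{k\in\NN_0} r_{p,\gamma}(k)<\infty$; taking products over $j\in[s]$ yields $\sum_{\bsk\in\NN_0^s} r_{\bsp,\bsgamma}(\bsk)<\infty$. Together with $|\beta_{\bsk}|\equiv 1$, this furnishes an absolutely summable majorant, so Fubini/dominated convergence legitimizes all exchanges. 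Apart from this standard verification the argument is purely bookkeeping, so I expect no real obstacle; the main care is simply to keep track of the $\bszero$-term through all three pieces of \eqref{fr_wc_err} so that the cancellation is transparent.
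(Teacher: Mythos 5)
Your proposal is correct and follows exactly the route the paper intends: the paper itself only states that Proposition~\ref{propehat} follows from combining \eqref{eqshifterror}, \eqref{fr_wc_err} and Proposition~\ref{fo_shinv_kernel_a} ``by some straightforward calculations,'' and your write-up supplies precisely those calculations, including the cancellation of the $\bsk=\bszero$ contributions and the absolute-summability justification (which also follows from Lemma~\ref{le2}, where $\sum_{\bsk}r_{\bsp,\bsgamma}(\bsk)=\prod_{j=1}^s(1+\gamma_j/2)<\infty$).
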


We are now going to use Proposition~\ref{propehat} to obtain error
bounds for the RMS worst-case error; we have the following result.
\begin{theorem}\label{thmerrsob}
We have
\begin{eqnarray*}
[\widehat{e}_{N,s}(\cHal_{\bsp},K_{s,\bsgamma})]^2 \le
\frac{1}{N^2}\left[\prod_{j=1}^s \left(1+\gamma_j (\log N)
\frac{p_j^2}{\log p_j}\right)+
\prod_{j=1}^s\left(1+\frac{\gamma_j}{2}\right) \prod_{j=1}^s
\left(1+\frac{\gamma_j p_j}{6}\right)\right].
\end{eqnarray*}
In particular, if $\sum_{j = 1}^\infty \gamma_j \frac{p_j^2}{\log
p_j} < \infty$, then for any $\delta >0$ we have
$$\widehat{e}_{N,s}(\cHal_{\bsp},K_{s,\bsgamma}) \ll_{\delta,\bsgamma,\bsp} \frac{1}{N^{1-\delta}}$$
where the implied constant is independent of the dimension $s$.
\end{theorem}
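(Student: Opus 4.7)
My plan starts from Proposition~\ref{propehat}, which reduces matters to bounding $\sum_{\bsk\ne\bszero} r_{\bsp,\bsgamma}(\bsk)\,|T_N(\bsk)|^2$ with $T_N(\bsk):=\frac{1}{N}\sum_{n=0}^{N-1}\beta_{\bsk}(\bsx_n)$. The crucial observation is that on the Halton sequence, $\beta_{\bsk}(\bsx_n)=e^{2\pi\icomp n\alpha(\bsk)}$ with $\alpha(\bsk)=\sum_{j=1}^s\phi_{p_j}(k_j)$, so $T_N(\bsk)$ is a classical one-dimensional geometric sum obeying the two standard estimates $|T_N(\bsk)|^2\le 1$ (trivial) and $|T_N(\bsk)|^2\le 1/(4N^2\sin^2(\pi\{\alpha(\bsk)\}))$ (exponential sum bound).

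I would then decompose $\sum_{\bsk\ne\bszero}$ according to $\uu=\{j:k_j>0\}$ and, for each nonempty $\uu$, according to the digit-length vector $\bsa_\uu=(a_j)_{j\in\uu}$ with $p_j^{a_j-1}\le k_j<p_j^{a_j}$; the coordinates $j\notin\uu$ contribute the constant factor $r_{p_j,\gamma_j}(0)=1+\gamma_j/3$. For fixed $\bsa_\uu$, setting $Q=\prod_{j\in\uu}p_j^{a_j}$, the Chinese remainder theorem puts the map $\bsk_\uu\mapsto\alpha(\bsk)$ in bijection with $\{J/Q:J\in(\ZZ/Q\ZZ)^\ast\}$. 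Using $r_{p_j,\gamma_j}(k_j)\le\gamma_j p_j^2/(8p_j^{2a_j})$ (from the elementary $1/\sin^2(\kappa\pi/p_j)\le p_j^2/4$) together with either the classical identity $\sum_{J=1}^{Q-1}\sin^{-2}(\pi J/Q)=(Q^2-1)/3$ or the trivial bound $|(\ZZ/Q\ZZ)^\ast|\le Q$, I would obtain
\[
\sum_{\bsk_\uu:\bsa_\uu}r_{\bsp,\bsgamma}(\bsk)\,|T_N(\bsk)|^2\le \Bigl(\prod_{j\in\uu}\tfrac{\gamma_j p_j^2}{8}\Bigr)\min\!\Bigl(\tfrac{1}{12N^2},\,\tfrac{1}{Q}\Bigr).
\]

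Summing over $\bsa_\uu\in\NN^{|\uu|}$ splits naturally at $Q=12N^2$: the region $Q\le 12N^2$ yields the $(\log N)^{|\uu|}/(N^2\prod_{j\in\uu}\log p_j)$ part via the volume estimate $|\{\bsa_\uu:Q\le 12N^2\}|\ll\prod_{j\in\uu}(\log N/\log p_j)$, producing the logarithmic first term of the bound; the tail $Q>12N^2$ is handled by geometric-series estimates on $\sum 1/Q$, producing the constant second term. Recombining over $\uu\subseteq[s]$ via $\sum_\uu\prod_{j\in\uu}x_j\prod_{j\notin\uu}y_j=\prod_j(x_j+y_j)$, and absorbing the $1+\gamma_j/3$ prefactors into $(1+\gamma_j/2)(1+\gamma_j p_j/6)$ (valid since $p_j\ge 2$), yields the two stated product structures.

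For the tractability corollary, the second product is bounded by a constant depending only on $\bsgamma$ and $\bsp$, since $\sum\gamma_j p_j<\infty$ follows from the assumption $\sum\gamma_j p_j^2/\log p_j<\infty$ (using $p_j/\log p_j\ge 1$). For the first product, the crude estimate $\prod(1+c_j\log N)\le N^{\sum c_j}$ with $c_j:=\gamma_j p_j^2/\log p_j$ only gives $N^C$; instead, given $\delta>0$, choose a finite cutoff $J_\delta$ with $\sum_{j>J_\delta}c_j<\delta$, so the tail factors contribute at most $\delta\log N$ in logarithm while each of the finitely many $j\le J_\delta$ contributes $O(\log\log N)=o(\log N)$. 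This yields the first product $\le C_{\delta,\bsgamma,\bsp}\,N^{2\delta}$ with constant independent of $s$; dividing by $N^2$ and taking square roots gives $\widehat{e}_{N,s}\ll_{\delta,\bsgamma,\bsp}N^{-1+\delta}$. The main obstacle will be the combinatorial accounting in the recombination step: the identity bound (small $Q$) and trivial bound (large $Q$) must be balanced so that the two additive product structures emerge with the correct prefactors, which requires careful tracking of the $r_{p_j,\gamma_j}(0)$ contributions and of the tail sum $\sum_{Q>12N^2}1/Q$ for $|\uu|\ge 2$.
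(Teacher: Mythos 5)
Your overall architecture is sound and largely parallels the paper's: Proposition~\ref{propehat} as the starting point, the identification $\beta_{\bsk}(\bsx_n)=\mathrm{e}^{2\pi\icomp n\alpha(\bsk)}$ with $\alpha(\bsk)=\sum_j\phi_{p_j}(k_j)$ together with the geometric-sum bound (this is exactly Lemma~\ref{bd_gammasum} plus the estimate \eqref{bdt} from \cite{P13}, and your CRT bijection with $(\ZZ/Q\ZZ)^\ast$ is the mechanism behind \eqref{bdt}), and the Hickernell--Niederreiter lemma for the tractability statement. But there is a genuine gap in how you handle the tail. You claim that the region $Q=\prod_{j\in\uu}p_j^{a_j}>12N^2$, bounded cell-by-cell by $\bigl(\prod_{j\in\uu}\gamma_jp_j^2/8\bigr)Q^{-1}$, produces the \emph{constant} (log-free) second term. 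It does not: for $|\uu|=t\ge 2$ one has
\[
\sum_{\bsa\in\NN^{t}:\,Q>M}\frac{1}{Q}\;\asymp\;\frac{(\log M)^{t-1}}{M\prod_{j\in\uu}\log p_j},
\]
because the shell $Q\in(M,pM]$ already contains on the order of $(\log M)^{t-1}$ digit-length vectors $\bsa$, each contributing $\approx 1/M$. (Check $t=2$, $p_1=2$, $p_2=3$: summing $3^{-b}$ over $b>\log_3(M/2^a)$ gives $\asymp 2^a/M$, and then summing over $a\le\log_2 M$ gives $\asymp(\log M)/M$.) So your tail cannot be bounded by $C/N^2$ with $C$ independent of $N$; it carries $(\log N)^{|\uu|-1}$ factors and can only be absorbed into the logarithmic first product, at the cost of inflating its constant. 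The stated inequality, with the clean second term $\prod_j(1+\gamma_j/2)\prod_j(1+\gamma_jp_j/6)$, is therefore not reachable by your decomposition. The paper avoids this by thresholding each coordinate separately rather than the product $Q$: it splits $\NN_0^s\setminus\{\bszero\}$ into the box $\Delta_{\bsp}^{\ast}(\bsg)$ with $k_j<p_j^{g_j}$ for all $j$ (where the exponential-sum bound is used) and its complement, where at least one coordinate is large; on the complement the trivial bound $|T_N|\le 1$ is combined with the closed forms of Lemma~\ref{le2}, $\sum_{k}r_{p,\gamma}(k)=1+\gamma/2$ and $\sum_{k\ge p^{g}}r_{p,\gamma}(k)=\gamma/(6p^{g})$, and the per-coordinate tail mass $\gamma_jp_j/(6N^2)$ is genuinely log-free. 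Your cell-by-cell trivial bound, which replaces the exact $r$-mass of a cell by (cardinality)$\times$(max of $r$), is what reintroduces the logarithms.

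Two smaller points. First, the exponential-sum estimate should read $|T_N(\bsk)|^2\le 1/(N^2\sin^2(\pi\alpha(\bsk)))$; your version with the extra factor $4$ in the denominator is false in general (take $N=1$, $\alpha=1/2$) --- the factor $4$ belongs to the variant with the distance to the nearest integer, $\|\alpha\|$, in place of $|\sin(\pi\alpha)|$. This propagates into your claimed constants $\gamma_jp_j^2/8$ and $1/(12N^2)$. Second, your tractability argument is correct and is the same as the paper's ($\sum_j\gamma_jp_j<\infty$ follows from the hypothesis, and the finite-cutoff argument for $\prod_j(1+c_j\log N)\le C_\delta N^{\delta}$ is precisely \cite[Lemma~3]{HN03}), so the ``in particular'' part of the theorem survives your route even with the weakened main inequality.
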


If $p_j$ is the $j$-th prime number, then we know from the Prime
Number Theorem that $p_j \sim j \log j$ for $j \rightarrow
\infty$ and hence the condition $\sum_{j \ge 1} \gamma_j
\frac{p_j^2}{\log p_j} < \infty$ is equivalent to $\sum_{j \ge 1}
\gamma_j \frac{(j \log j)^2}{\log (j \log j)} < \infty$. Hence we
obtain the following corollary to Theorem~\ref{thmerrsob}:
\begin{corollary}
If $$\sum_{j \ge 1} \gamma_j j^2 \log j < \infty,$$ then for any
$\delta >0$ we have
$$\widehat{e}_{N,s}(\cHal_{\bsp},K_{s,\bsgamma}) \ll_{\delta,\bsgamma,\bsp} \frac{1}{N^{1-\delta}}$$
where the implied constant is independent of the dimension $s$.
Consequently, the corresponding QMC rule achieves a ``strong
polynomial tractability'' error bound and the
$\varepsilon$-exponent is equal to one, which is optimal in view of the fact that we are using an open type QMC rule (see our remark in the introduction).
\end{corollary}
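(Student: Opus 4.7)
The natural starting point is Proposition~\ref{propehat}, which gives
\[
[\widehat{e}_{N,s}(\cHal_{\bsp},K_{s,\bsgamma})]^2 = \frac{1}{N^2}\sum_{\bsk\in\NN_0^s\setminus\{\bszero\}} r_{\bsp,\bsgamma}(\bsk)\,|S_N(\bsk)|^2,
\]
where $S_N(\bsk):=\sum_{n=0}^{N-1}\beta_{\bsk}(\bsx_n)$ are character sums along the unshifted Halton sequence. Using $\phi_{p_j}^{+}\circ\phi_{p_j}=\mathrm{id}$ on $\NN_0$, one identifies $\beta_{\bsk}(\bsx_n) = \mathrm{e}^{2\pi\mathrm{i}\,n\alpha_{\bsk}}$ with $\alpha_{\bsk} := \sum_{j=1}^s\phi_{p_j}(k_j)\pmod{1}$, so that $S_N(\bsk)$ is an ordinary geometric sum and $|S_N(\bsk)|^2\le \min\!\bigl(N^2,\csc^2(\pi\alpha_{\bsk})\bigr)$. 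Because the $p_j$ are mutually distinct primes, the coprimality of the denominators in $\sum_j\phi_{p_j}(k_j)$ forces $\alpha_{\bsk}\not\equiv 0\pmod{1}$ whenever $\bsk\ne\bszero$ and yields $\csc^2(\pi\alpha_{\bsk})\le \tfrac14\prod_{j:\,k_j>0}p_j^{2a_j}$, where $a_j = a_j(k_j)$ denotes the base-$p_j$ digit length of $k_j$.

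The plan is then to partition the outer sum by support $\uu=\{j:k_j>0\}$ and digit profile $(a_j)_{j\in\uu}$. Combining the explicit formula for $r_{\bsp,\bsgamma}$ from Proposition~\ref{fo_shinv_kernel_a} with the classical identity $\sum_{\kappa=1}^{p-1}\csc^2(\kappa\pi/p)=(p^2-1)/3$ produces the coordinate-wise aggregation
\[
\sum_{k_j:\,a_j(k_j)=a}r_{p_j,\gamma_j}(k_j) = \frac{\gamma_j(p_j-1)}{6\,p_j^{a}}\qquad (a\ge 1),
\]
with $r_{p_j,\gamma_j}(0)=1+\gamma_j/3$. An $N$-dependent cutoff (at $a_j$ of order $\log N/\log p_j$) selects, coordinate-by-coordinate, which of the bounds $\csc^2\le\tfrac14\prod p_j^{2a_j}$ or $|S_N|^2\le N^2$ to use. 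Below the cutoff, the $p_j^{-a_j}$-decay of the aggregated weight offsets the $p_j^{2a_j}$-growth of the geometric bound, so that summing over $a_j$ yields a factor $\lesssim\gamma_j(\log N)p_j^2/\log p_j$; above the cutoff, the trivial bound plus the remaining geometric tail in $a_j$ contributes at most $\gamma_j p_j/6$. Reassembling via the subset-sum identity $\sum_{\uu\subseteq[s]}\prod_{j\in\uu}A_j\prod_{j\notin\uu}B_j=\prod_{j=1}^s(A_j+B_j)$ produces the two product terms of the stated bound.

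For the second half of the theorem, assume $\sum_{j\ge 1}\gamma_j p_j^2/\log p_j<\infty$. Given $\delta>0$, choose $s_0$ with $\sum_{j>s_0}\gamma_j p_j^2/\log p_j<\delta$. Then
\[
\prod_{j>s_0}\!\Bigl(1+\gamma_j(\log N)\tfrac{p_j^2}{\log p_j}\Bigr)\le \exp(\delta\log N)=N^\delta,
\]
while the first $s_0$ factors contribute a constant $C_{s_0}(\bsgamma,\bsp)$ independent of $s$, and the companion term $\prod_j(1+\gamma_j/2)\prod_j(1+\gamma_j p_j/6)$ is uniformly bounded in $s$ under the same hypothesis. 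Consequently $[\widehat{e}_{N,s}]^2\ll_{\bsgamma,\bsp,\delta}N^{\delta-2}$, which upon renaming $\delta$ gives the announced rate with a dimension-independent implied constant.

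The main obstacle is the cutoff analysis. The worst-case geometric estimate $\csc^2(\pi\alpha_{\bsk})\le\tfrac14\prod p_j^{2a_j}$, if applied uniformly to all digit profiles, produces factors of order $N^{|\uu|-2}$ that are useless as soon as $|\uu|\ge 2$. Extracting the correct $(\log N)$-type behaviour coordinate-by-coordinate, so that the digit-length sums decompose multiplicatively over $j\in[s]$ and match the tensor-product structure of the kernel $K_{s,\bsgamma}$, is the technical heart of the argument.
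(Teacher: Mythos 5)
Your proposal does not actually prove the Corollary as stated: it proves (or attempts to prove) Theorem~\ref{thmerrsob}, whose hypothesis is $\sum_{j\ge 1}\gamma_j p_j^2/\log p_j<\infty$, and never makes the connection to the Corollary's hypothesis $\sum_{j\ge 1}\gamma_j j^2\log j<\infty$. That connection is the entire content of the Corollary given the Theorem: one takes $p_j$ to be the $j$-th prime and invokes the Prime Number Theorem, $p_j\sim j\log j$, so that $p_j^2/\log p_j\sim (j\log j)^2/\log(j\log j)\sim j^2\log j$ and the two summability conditions are equivalent. In the paper this is the whole proof; your write-up omits it and simply assumes the Theorem's condition in the second half.

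Within your re-derivation of the Theorem there is also a genuine quantitative gap at the step you yourself call the ``technical heart.'' You claim that below the cutoff ``the $p_j^{-a_j}$-decay of the aggregated weight offsets the $p_j^{2a_j}$-growth of the geometric bound, so that summing over $a_j$ yields a factor $\lesssim \gamma_j(\log N)p_j^2/\log p_j$.'' This is false with the pointwise bound $\csc^2(\pi\alpha_{\bsk})\le\tfrac14\prod_j p_j^{2a_j}$: per coordinate the product of the aggregated weight $\gamma_j(p_j-1)/(6p_j^{a})$ with $p_j^{2a}$ is of order $\gamma_j p_j^{a}$, and summing over $a\le g_j\approx 2\log_{p_j}N$ gives $\asymp\gamma_j p_j^{g_j+1}\asymp\gamma_j p_j N^2$, not a logarithmic factor; this is exactly the $N^{|\uu|}$-type blow-up you warn against, and it already occurs below the cutoff. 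What rescues the argument in the paper is the block-averaged character-sum estimate (Eq.~\eqref{bdt}, taken from \cite{P13}),
\[
\sum_{k_1=p_1^{u_1}}^{p_1^{u_1+1}-1}\cdots\sum_{k_t=p_t^{u_t}}^{p_t^{u_t+1}-1}\frac{1}{|\sin(\pi\sum_{j=1}^t\phi_{p_j}(k_j))|^2}\le\frac{1}{3}\prod_{j=1}^t p_j^{2u_j+2},
\]
which says the \emph{sum} of $\csc^2$ over a whole digit block is of the same order as its single worst term; combined with $\csc^2(\kappa\pi/p)-1/3\le p_j^2$ this makes each block contribute $O\bigl(\prod_j\gamma_j p_j^2\bigr)$ uniformly in the digit profile, and only then does summing over the $g_j$ profiles per coordinate produce the factor $\gamma_j g_j p_j^2\asymp\gamma_j(\log N)p_j^2/\log p_j$. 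You neither state nor prove any such averaged estimate, so the key inequality \eqref{Sigmaest1} is not established. (A minor further slip: the first $s_0$ factors in your tractability argument are not a constant but grow like $(\log N)^{s_0}$; this is harmless since $(\log N)^{s_0}\ll_{\delta,s_0}N^{\delta}$, but it should be said.)
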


Note that the condition on the weights here is slightly better than the condition \eqref{weightcondwang}
of Wang for unshifted Halton sequences. For example, it is satisfied for weights of the form
$\gamma_j=j^{-(3+\epsilon)}$ with arbitrarily small but positive $\epsilon>0$. Such weights do not satisfy Wang's condition \eqref{weightcondwang}.\\

For the proof of Theorem~\ref{thmerrsob} we need the following
technical lemmas. The first one is \cite[Lemma~1]{P13}, which uses the fact that the $p_1,\ldots,p_s$ are
pairwise different primes.

\begin{lemma}[{\cite[Lemma~1]{P13}}]\label{bd_gammasum}
Let, for $n\in\NN_0$,
$\bsx_n=(\phi_{p_1}(n),\ldots,\phi_{p_s}(n))$ be the $n$-th point
of the Halton sequence $\cHal_{\bsp}$. Then for any $N \in \NN$
and any $\bsk \in \NN_0^s\setminus\{\bszero\}$ we have
$$\left|\sum_{n=0}^{N-1}\beta_{\bsk}(\bsx_n)\right| \le \frac{1}{|\sin(\pi \sum_{j=1}^s \phi_{p_j}(k_j))|}.$$
\end{lemma}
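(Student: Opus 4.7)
The plan is to reduce the sum directly to a geometric series by unwinding the definition of $\beta_{\bsk}$ at points of the form $\bsx_n=(\phi_{p_1}(n),\ldots,\phi_{p_s}(n))$. For each coordinate $j$, since $n\in\NN_0$ one has $\phi_{p_j}^+(\phi_{p_j}(n))=n$ because $n$ has a finite base-$p_j$ representation. Therefore
\[
\,_{p_j}\beta_{k_j}(\phi_{p_j}(n))=\,_{p_j}\chi_{k_j}(n)=\mathrm{e}^{2\pi\mathrm{i}\,n\,\phi_{p_j}(k_j)},
\]
and multiplying across $j$ gives $\beta_{\bsk}(\bsx_n)=\mathrm{e}^{2\pi\mathrm{i}\,n\,\alpha}$ with $\alpha:=\sum_{j=1}^{s}\phi_{p_j}(k_j)$. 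Thus the sum in question is an ordinary geometric sum in $\mathrm{e}^{2\pi\mathrm{i}\alpha}$.

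Assuming for the moment that $\alpha\notin\ZZ$, the standard identity
\[
\left|\sum_{n=0}^{N-1}\mathrm{e}^{2\pi\mathrm{i}n\alpha}\right|
=\left|\frac{\sin(\pi N\alpha)}{\sin(\pi\alpha)}\right|
\le \frac{1}{|\sin(\pi\alpha)|}
\]
yields the bound claimed in the lemma. So the whole statement boils down to verifying that $\alpha\notin\ZZ$ whenever $\bsk\neq\bszero$, which is exactly where the hypothesis that $p_1,\ldots,p_s$ are \emph{distinct primes} is needed.

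To prove $\alpha\notin\ZZ$, I would write $\phi_{p_j}(k_j)=m_j/p_j^{a_j}$, where for $k_j>0$ with base-$p_j$ digit length $a_j$ the reversed integer $m_j$ has last digit equal to the leading digit of $k_j$, which is nonzero; hence $\gcd(m_j,p_j)=1$. Pick a coordinate $j_0$ with $k_{j_0}>0$ and put everything over the common denominator $\prod_{j}p_j^{a_j}$. Because the $p_j$ are pairwise coprime primes, the contributions from $j\neq j_0$ carry a factor $p_{j_0}^{a_{j_0}}$ in the numerator, while the $j_0$-term contributes $m_{j_0}\prod_{i\neq j_0}p_i^{a_i}$, which is coprime to $p_{j_0}$. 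The total numerator is therefore not divisible by $p_{j_0}^{a_{j_0}}$, so $\alpha\notin\ZZ$.

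The main obstacle is this last divisibility argument; the geometric-series calculation is routine, but the fact that $\alpha$ avoids the integers relies crucially on both the \emph{primality} and the \emph{distinctness} of the $p_j$, and on the structural property that $\phi_{p_j}(k_j)$ has $p_j$-adic valuation exactly $-a_j$ when $k_j>0$. Once that is in place, the bound is immediate. (This also matches the hypothesis of \cite[Lemma~1]{P13} as invoked in the paper.)
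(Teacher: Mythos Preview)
Your proof is correct. The paper itself does not supply a proof of this lemma; it simply cites it as \cite[Lemma~1]{P13}, noting only that the argument ``uses the fact that the $p_1,\ldots,p_s$ are pairwise different primes.'' Your argument is exactly the natural one behind that citation: the identity $\phi_{p_j}^+\circ\phi_{p_j}=\mathrm{id}$ on $\NN_0$ collapses $\beta_{\bsk}(\bsx_n)$ to $\mathrm{e}^{2\pi\mathrm{i}n\alpha}$ with $\alpha=\sum_j\phi_{p_j}(k_j)$, and the geometric-sum bound then applies once one checks $\alpha\notin\ZZ$. Your divisibility argument for the latter is clean and is precisely where both the distinctness and the primality of the $p_j$ enter, matching the paper's remark.
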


We introduce the following notation: for $\bsg=(g_1,\ldots,g_s)\in
\NN^s$ let
$$\Delta_{\bsp}(\bsg)=\{\bsk=(k_1,\ldots,k_s)\in \NN_0^s\, : \, 0 \le k_i < p_i^{g_i}\mbox{ for all } 1 \le i \le s\}$$
and let
$$\overline{\Delta}_{\bsp}(\bsg)=\{\bsk=(k_1,\ldots,k_s)\in
\NN^s\, : \, 1 \le k_i < p_i^{g_i}\mbox{ for all } 1 \le i \le
s\}.$$ Furthermore, let
$\Delta^{\ast}(\bsg)=\Delta(\bsg)\setminus\{\bszero\}$.

We also need the following lemma.
\begin{lemma}\label{le2}
We have $$\sum_{\bsk \in \NN_0^s}
r_{\bsp,\bsgamma}(\bsk)=\prod_{j=1}^s
\left(1+\frac{\gamma_j}{2}\right) \ \ \ \mbox{ and } \ \ \
\sum_{\bsk \in \Delta_{\bsp}(\bsg)}
r_{\bsp,\bsgamma}(\bsk)=\prod_{j=1}^s\left(1+\frac{\gamma_j}{3}
+\frac{\gamma_j}{6}\left(1-\frac{1}{p_j^{g_j}}\right)\right).$$
\end{lemma}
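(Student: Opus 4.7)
The plan is to exploit the product structure $r_{\bsp,\bsgamma}(\bsk) = \prod_{j=1}^s r_{p_j,\gamma_j}(k_j)$ together with the fact that both index sets factorize, namely $\NN_0^s = \prod_{j=1}^s \NN_0$ and $\Delta_{\bsp}(\bsg) = \prod_{j=1}^s \{0,1,\ldots,p_j^{g_j}-1\}$. This immediately reduces both identities to corresponding one-dimensional sums of the form $\sum_{k=0}^{\infty} r_{p,\gamma}(k)$ and $\sum_{k=0}^{p^g-1} r_{p,\gamma}(k)$, so it suffices to show
\[
\sum_{k=0}^{\infty} r_{p,\gamma}(k) = 1 + \frac{\gamma}{2}, \qquad \sum_{k=0}^{p^g-1} r_{p,\gamma}(k) = 1 + \frac{\gamma}{3} + \frac{\gamma}{6}\left(1 - \frac{1}{p^g}\right).
\]

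To evaluate these, I would split off the $k=0$ contribution of $1 + \gamma/3$, and then regroup the sum over $k \ge 1$ according to the length $a$ of the $p$-adic expansion of $k$ (i.e., $p^{a-1} \le k < p^a$) and its leading digit $\kappa_{a-1} \in \{1,\ldots,p-1\}$. For fixed $a$ and $\kappa_{a-1}$ there are exactly $p^{a-1}$ choices for the remaining digits, and $r_{p,\gamma}(k)$ depends only on $a$ and $\kappa_{a-1}$. Hence
\[
\sum_{k \ge 1} r_{p,\gamma}(k) = \frac{\gamma}{2}\sum_{a \ge 1} \frac{p^{a-1}}{p^{2a}} \sum_{\kappa=1}^{p-1}\left(\frac{1}{\sin^2(\kappa\pi/p)} - \frac{1}{3}\right),
\]
and similarly for the truncated sum with $a$ running only up to $g$.

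The key ingredient is the classical trigonometric identity $\sum_{\kappa=1}^{p-1} \sin^{-2}(\kappa\pi/p) = (p^2-1)/3$, which gives $\sum_{\kappa=1}^{p-1}(\sin^{-2}(\kappa\pi/p) - 1/3) = p(p-1)/3$. Combined with the geometric sum $\sum_{a \ge 1} p^{-(a+1)} = 1/(p(p-1))$, the infinite sum collapses to $\gamma/6$, giving $1 + \gamma/3 + \gamma/6 = 1 + \gamma/2$. For the truncated version, the partial geometric sum $\sum_{a=1}^{g} p^{-(a+1)} = (1-p^{-g})/(p(p-1))$ yields $\gamma(1-p^{-g})/6$, producing the stated formula.

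The only nonroutine step is invoking the trigonometric identity; everything else is bookkeeping. Since it is a standard fact (derivable, e.g., from the partial fraction expansion of $\csc^2$ or from roots of unity), I would simply cite it and present the geometric/counting calculation. Taking products over $j = 1,\ldots,s$ of the one-dimensional formulas then yields both claims of the lemma.
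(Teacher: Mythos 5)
Your proposal is correct and follows essentially the same route as the paper: reduce to one dimension via the product structure, group the terms $k\ge 1$ by the position of the most significant digit (your index $a$ is the paper's $u+1$), apply the identity $\sum_{\kappa=1}^{p-1}\sin^{-2}(\kappa\pi/p)=(p^2-1)/3$, and finish with a geometric (respectively partial geometric) series. All the computations check out, so nothing further is needed.
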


\begin{proof}
This is easy calculation. Since $r_{\bsp,\bsgamma}$ is of product
form it suffices to show the one-dimensional case (to keep notation simple, we omit the
indices $j$ denoting the components).

To show the first formula, we split the summation over the $k \in
\NN$ into summations over the domains $\{p^u,p^u+1,\ldots,p^{u+1}-1\}$ for $u \in \NN_0$. More precisely,
\begin{eqnarray*}
\sum_{k=0}^{\infty} r_{p,\gamma}(k) = 1+ \frac{\gamma}{3}
+\sum_{u=0}^{\infty} \frac{\gamma}{2 p^{2(u+1)}}
\sum_{k=p^u}^{p^{u+1}-1} \left(\frac{1}{\sin^2(\kappa_u
\pi/p)}-\frac{1}{3}\right),
\end{eqnarray*}
where $\kappa_u$ is the most significant digit in the $p$-adic
expansion of $k \in \{p^u,p^u+1,\ldots,p^{u+1}-1\}$. Then
\begin{eqnarray*}
\sum_{k=0}^{\infty} r_{p,\gamma}(k) = 1+ \frac{\gamma}{3}
+\sum_{u=0}^{\infty} \frac{\gamma}{2 p^{2(u+1)}} \left( p^u
\sum_{\kappa_u=1}^{p-1} \frac{1}{\sin^2(\kappa_u \pi/p)}
-\frac{p^u(p-1)}{3}\right).
\end{eqnarray*}
Now we use the formula $$\sum_{\kappa=1}^{p-1}
\frac{1}{\sin^2(\kappa \pi/p)}=\frac{p^2-1}{3}$$ (see, for
example, \cite[Corollary~A.23]{DP10}). Then we have
\begin{eqnarray*}
\sum_{k=0}^{\infty} r_{p,\gamma}(k) & = & 1+ \frac{\gamma}{3} +\sum_{u=0}^{\infty} \frac{\gamma}{2 p^{u+2}} \left(\frac{p^2-1}{3}   -\frac{p-1}{3}\right)\\
& = & 1+\frac{\gamma}{3} +\frac{\gamma (p-1)}{6 p} \sum_{u=0}^{\infty} \frac{1}{p^u}\\
& = & 1+\frac{\gamma}{2}.
\end{eqnarray*}
The second formula can be shown in the same way with the only
difference that the sum over $u$ is now restricted to $u \in
\{0,1,\ldots,g-1\}$.
\end{proof}

Now we give the proof of Theorem~\ref{thmerrsob}:

\begin{proof}
Using Proposition~\ref{propehat} and the fact that
$|\beta_{\bsk}(\bsx)| \le 1$ we have, for $\bsg\in\NN^s$,
\begin{equation*}
[\widehat{e}_{N,s}(\cHal_{\bsp},K_{s,\bsgamma})]^2\le
\sum_{\bsk \in \Delta_{\bsp}^{\ast}(\bsg)} r_{\bsp,\bsgamma}(\bsk)
\left|\frac{1}{N}\sum_{n=0}^{N-1}\beta_{\bsk}(\bsx_n)\right|^2 +
\sum_{\bsk \in \NN_0^s \setminus \Delta_{\bsp}(\bsg)}
r_{\bsp,\bsgamma}(\bsk).
\end{equation*}
Using Lemma~\ref{le2} we have
\begin{eqnarray*}
\sum_{\bsk \in \NN_0^s \setminus \Delta_{\bsp}(\bsg)}
r_{\bsp,\bsgamma}(\bsk) & =
& \sum_{\bsk \in \NN_0^s} r_{\bsp,\bsgamma}(\bsk) - \sum_{\bsk \in \Delta_{\bsp}(\bsg)} r_{\bsp,\bsgamma}(\bsk)\\
& = & \prod_{j=1}^s\left(1+\frac{\gamma_j}{2}\right)-\prod_{j=1}^s\left(1+\frac{\gamma_j}{3} +\frac{\gamma_j}{6}\left(1-\frac{1}{p_j^{g_j}}\right)\right)\\
& = & \prod_{j=1}^s\left(1+\frac{\gamma_j}{2}\right) \left[1-\prod_{j=1}^s\left(1- \frac{\gamma_j}{6 p_j^{g_j}\left(1+\frac{\gamma_j}{2}\right)}\right)\right]\\
& \le &
\prod_{j=1}^s\left(1+\frac{\gamma_j}{2}\right)\left[1-\prod_{j=1}^s\left(1-
\frac{\gamma_j}{6 p_j^{g_j}}\right)\right].
\end{eqnarray*}

Let $$\Sigma:=\sum_{\bsk \in \Delta_{\bsp}^{\ast}(\bsg)}
r_{\bsp,\bsgamma}(\bsk)
\left|\frac{1}{N}\sum_{n=0}^{N-1}\beta_{\bsk}(\bsx_n)\right|^2.$$
We can use Lemma~\ref{bd_gammasum} to obtain
\begin{equation}\label{Sigmaest0}
\Sigma \le  \frac{1}{N^2} \sum_{\emptyset \not= \uu \subseteq [s]}
\sum_{\bsk_{\uu} \in \overline{\Delta}_{\bsp_{\uu}}(\bsg_{\uu})}
\frac{\prod_{j \in \uu}r_{p_j,\gamma_j}(k_j)}{|\sin(\pi \sum_{j
\in \uu}\phi_{p_j}(k_j))|^2},
\end{equation}
where $\bsp_{\uu}$ is the vector of those $p_j$ for which $j\in\uu$ (and similarly for $\bsg_{\uu}$).

Next we show that
\begin{equation}\label{Sigmaest1}
\sum_{\bsk_{\uu} \in \overline{\Delta}_{\bsp_{\uu}}(\bsg_{\uu})}
\frac{\prod_{j \in \uu}r_{p_j,\gamma_j}(k_j)}{|\sin(\pi \sum_{j
\in \uu}\phi_{p_j}(k_j))|^2} \le \frac{1}{3} \prod_{j\in \uu}
\frac{\gamma_j g_j p_j^2}{2}.
\end{equation}
In order to keep the notation simple we assume that $\uu=[t]$ (the
argument works in the same way for general $\uu \subseteq [s]$).
We have
\begin{eqnarray*}
\lefteqn{\sum_{\bsk_{[t]} \in
\overline{\Delta}_{\bsp_{[t]}}(\bsg_{[t]})}
\frac{\prod_{j=1}^t r_{p_j,\gamma_j}(k_j)}{|\sin(\pi \sum_{j =1}^t \phi_{p_j}(k_j))|^2}}\\
& = & \sum_{u_1=0}^{g_1 -1}\ldots \sum_{u_t=0}^{g_t -1}
\sum_{k_1=p_1^{u_1}}^{p_1^{u_1 +1}-1}\ldots
\sum_{k_t=p_t^{u_t}}^{p_t^{u_t +1}-1} \prod_{j=1}^t
\frac{\gamma_j}{2 p_j^{2 (u_j+1)}} \left(\frac{1}{\sin^2(\pi
\kappa_{j,u_j}/p_j)}-\frac{1}{3}\right)\frac{1}{|\sin(\pi  \sum_{j
=1}^t \phi_{p_j}(k_j))|^2},
\end{eqnarray*}
where $\kappa_{j,u_j}$ is the most significant digit of $k_j\in\{p_j^{u_j},\ldots,p_j^{u_j+1}-1\}$.

It has been shown in \cite[p.~181]{DP05} that
$$\frac{1}{\sin^2(\pi \kappa_{j,u_j}/p_j)}-\frac{1}{3} \le \frac{4
p_j^2-9}{27} \le p_j^2.$$ Hence we have
\begin{eqnarray*}
\lefteqn{\sum_{\bsk_{[t]} \in
\overline{\Delta}_{\bsp_{[t]}}(\bsg_{[t]})}
\frac{\prod_{j=1}^t r_{p_j,\gamma_j}(k_j)}{|\sin(\pi \sum_{j =1}^t \phi_{p_j}(k_j))|^2}}\\
& \le & \sum_{u_1=0}^{g_1 -1}\ldots \sum_{u_t=0}^{g_t -1}
\prod_{j=1}^t \frac{\gamma_j}{2 p_j^{2 u_j}}
\sum_{k_1=p_1^{u_1}}^{p_1^{u_1 +1}-1}\ldots
\sum_{k_t=p_t^{u_t}}^{p_t^{u_t +1}-1}  \frac{1}{|\sin(\pi  \sum_{j
=1}^t \phi_{p_j}(k_j))|^2}.
\end{eqnarray*}
Now we use the estimate
\begin{equation}\label{bdt}
\sum_{k_1=p_1^{u_1}}^{p_1^{u_1 +1}-1}\ldots
\sum_{k_t=p_t^{u_t}}^{p_t^{u_t +1}-1}\frac{1}{|\sin(\pi
\sum_{j=1}^t \phi_{p_j}(k_j))|^2} \le \frac{1}{3} \prod_{j =1}^t
p_j^{2 u_j+2},
\end{equation}
for nonnegative integers $u_1,\ldots,u_t$, the proof of which follows
exactly the lines of \cite[Proof of Eq. (8)]{P13} and obtain
\begin{eqnarray*}
\sum_{\bsk_{[t]} \in \overline{\Delta}_{\bsp_{[t]}}(\bsg_{[t]})}
\frac{\prod_{j=1}^t r_{p_j,\gamma_j}(k_j)}{|\sin(\pi \sum_{j =1}^t
\phi_{p_j}(k_j))|^2}\le  \frac{1}{3} \prod_{j=1}^t \frac{\gamma_j
g_j p_j^2}{2}.
\end{eqnarray*}
Hence \eqref{Sigmaest1} is shown.

Now, inserting \eqref{Sigmaest1} into \eqref{Sigmaest0} gives
\begin{eqnarray*}
\Sigma \le \frac{1}{N^2} \prod_{j=1}^s \left(1+\frac{\gamma_j g_j
p_j^2}{2}\right).
\end{eqnarray*}
Therefore,
\begin{equation*}
 [\widehat{e}_{N,s}(\cHal_{\bsp},K_{s,\bsgamma})]^2
\le \frac{1}{N^2} \prod_{j=1}^s \left(1+\frac{\gamma_j g_j
p_j^2}{2}\right) +
\prod_{j=1}^s\left(1+\frac{\gamma_j}{2}\right)\left[1-\prod_{j=1}^s\left(1-
\frac{\gamma_j}{6 p_j^{g_j}}\right)\right].
\end{equation*}
By choosing $g_j=\lfloor 2 \log_{p_j} N \rfloor$ we obtain
\begin{eqnarray*}
\lefteqn{[\widehat{e}_{N,s}(\cHal_{\bsp},K_{s,\bsgamma})]^2}\\
&\le&  \frac{1}{N^2} \prod_{j=1}^s \left(1+\gamma_j (\log N) \frac{p_j^2}{\log p_j}\right)+
\prod_{j=1}^s\left(1+\frac{\gamma_j}{2}\right)\left[1-\prod_{j=1}^s\left(1- \frac{\gamma_j p_j}{6 N^2}\right)\right]\\
& \le & \frac{1}{N^2}\left[\prod_{j=1}^s \left(1+\gamma_j (\log N)
\frac{p_j^2}{\log p_j}\right) +
\prod_{j=1}^s\left(1+\frac{\gamma_j}{2}\right) \prod_{j=1}^s
\left(1+\frac{\gamma_j p_j}{6}\right)\right]
\end{eqnarray*}
as desired.

We have
\begin{eqnarray*}
\prod_{j=1}^s\left(1+\frac{\gamma_j}{2}\right) \prod_{j=1}^s
\left(1+\frac{\gamma_j p_j}{6}\right) & = {\rm e}^{\sum_{j=1}^s
\left(\log \left(1+\frac{\gamma_j}{2}\right) +
\log\left(1+\frac{\gamma_j p_j}{6}\right)\right)} \le {\rm
e}^{\sum_{j=1}^s \gamma_j p_j}.
\end{eqnarray*}
Hence, if $\sum_{j=1}^{\infty} \gamma_j p_j < \infty$ then we
obtain
$$\prod_{j=1}^{\infty}\left(1+\frac{\gamma_j}{2}\right) \prod_{j=1}^{\infty} \left(1+\frac{\gamma_j p_j}{6}\right)< \infty.$$
To estimate the term
\[\prod_{j=1}^s \left(1+\gamma_j \frac{p_j^2}{\log p_j} (\log N)\right)\]
we use an argument by Hickernell and Niederreiter
(see~\cite[Lemma~3]{HN03} or \cite[p.~222]{DP10}), which implies
that for any $\delta>0$ there is a positive constant
$C_{\delta,\bsgamma,\bsp}>0$ with the property that
\[\prod_{j=1}^\infty \left(1+\gamma_j \frac{p_j^2}{\log p_j} (\log N)\right)\le C_{\delta,\bsgamma,\bsp}N^\delta\]
provided that $\sum_{j=1}^\infty \gamma_j \frac{p_j^2}{\log
p_j}<\infty$. Combining these observations, the second assertion
in the theorem follows as well.
\end{proof}

\section{The RMS $L_2$-discrepancy of the $\bsp$-adically shifted Halton sequence}\label{secRMSL2}

As already mentioned, the results from Theorem~\ref{thmerrsob} on
the RMS worst-case error of integration in $\cH (K_{s,\bsgamma})$
using Halton sequences carry over to the RMS weighted
$L_2$-discrepancy. Let us consider the unweighted case
$\bsgamma=\bsone$, i.e., $\gamma_j=1$ for all $j \ge 1$.

It follows from a result of Roth~\cite{Roth} that for any
$N$-element point set $\cP$ in $[0,1)^s$ the $L_2$-discrepancy is
at least of order $$L_{2,N,\bsone}(\cP) \gg_s \frac{(\log
N)^{\frac{s-1}{2}}}{N}$$ and this is optimal for general point
sets (see \cite{Roth2,Roth4}). Explicit constructions of point
sets whose $L_2$-discrepancies achieve this lower bound were given
by Chen and Skriganov~\cite{CS02} and by Dick and
Pillichshammer~\cite{DP14}. Roth's result for finite point sets
was extended to infinite sequences by Proinov~\cite{pro86} who
showed that for every sequence $\mathcal{S}$ in $[0,1)^s$ we have
\begin{equation}\label{pro}
L_{2,N,\bsone}(\mathcal{S}) \gg_s \frac{(\log N)^{\frac{s}{2}}}{N}
\ \ \ \mbox{ for infinitely many $N \in \NN$,}
\end{equation}
and this is again optimal. An explicit construction of infinite
sequences whose $L_2$-discrepancies achieve this lower bound was
given by Dick and Pillichshammer~\cite{DP14}.

On the other hand, for the Halton sequence $\cHal_{\bsp}$  in
dimension $s$ with mutually relative prime bases
$\bsp=(p_1,\ldots,p_s)$ it is known that
$$L_{2,N,\bsone}(\cHal_{\bsp}) \ll_s \frac{(\log N)^s}{N}.$$ This
can be deduced from the estimate for the star-discrepancy of the
Halton sequence, see, for example, \cite{DP10,LP14,N92}.

Let $\widehat{L}_{2,N,\bsone}(\cHal_{\bsp})$ denote the RMS
$L_2$-discrepancy of the $\bsp$-adically shifted Halton sequence
$\cHal_{\bsp}$. Then Theorem~\ref{thmerrsob} implies the following
result.

\begin{corollary}
The RMS $L_2$-discrepancy of the $\bsp$-adically shifted Halton
sequence satisfies
\begin{equation}\label{bdavL2}
\widehat{L}_{2,N,\bsone}(\cHal_{\bsp}) \ll_s \frac{(\log
N)^{\frac{s}{2}}}{N}.
\end{equation}
According to \eqref{pro} this bound is best possible in the order
of magnitude in $N$.

In particular, for every $N \ge 2$ there exists a $\bsp$-adic
shift $\bssigma_*$ (which depends on $N$), such that the
$L_2$-discrepancy of $\cHal_{\bsp,\bssigma_*}$ satisfies
$$L_{2,N,\bsone}(\cHal_{\bsp,\bssigma_*}) \ll_s \frac{(\log
N)^{\frac{s}{2}}}{N}.$$
\end{corollary}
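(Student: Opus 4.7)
The plan is to deduce the corollary directly from Theorem~\ref{thmerrsob} via the identity \eqref{errL2disc} and to finish with a routine mean-value argument. First I would note that \eqref{errL2disc} is a pointwise identity in the shift: for every $\bssigma \in [0,1)^s$ we have $e_{N,s}(\cHal_{\bsp,\bssigma},K_{s,\bsone}) = L_{2,N,\bsone}(\cHal_{\bsp,\bssigma})$. Squaring both sides and taking the expectation with respect to $\bssigma$ yields
\[
[\widehat{L}_{2,N,\bsone}(\cHal_{\bsp})]^2 = \EE_{\bssigma}[L^2_{2,N,\bsone}(\cHal_{\bsp,\bssigma})] = \EE_{\bssigma}[e^2_{N,s}(\cHal_{\bsp,\bssigma},K_{s,\bsone})] = [\widehat{e}_{N,s}(\cHal_{\bsp},K_{s,\bsone})]^2,
\]
so it suffices to bound the right-hand side.

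Next I would specialize Theorem~\ref{thmerrsob} to the unweighted case $\bsgamma = \bsone$. Since $s$ and the bases $\bsp=(p_1,\ldots,p_s)$ are held fixed, each factor $1 + (\log N) p_j^2/\log p_j$ appearing in the first product on the right-hand side of the bound is $\ll_s \log N$, so the entire product is $\ll_s (\log N)^s$, while the second product is a constant depending on $s$ and $\bsp$. Dividing by $N^2$ and taking the square root then gives
\[
\widehat{L}_{2,N,\bsone}(\cHal_{\bsp}) \ll_s \frac{(\log N)^{s/2}}{N},
\]
which is \eqref{bdavL2}. Optimality in the order of $N$ is immediate from Proinov's lower bound \eqref{pro}.

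For the existence of a concrete deterministic shift $\bssigma_*$, I would invoke the standard mean-value principle: because the expectation of $L^2_{2,N,\bsone}(\cHal_{\bsp,\bssigma})$ over uniformly distributed $\bssigma$ is at most $C_s (\log N)^s / N^2$, there must exist at least one $\bssigma_* = \bssigma_*(N) \in [0,1)^s$ for which $L^2_{2,N,\bsone}(\cHal_{\bsp,\bssigma_*})$ does not exceed this mean value; taking square roots yields the claimed bound on $L_{2,N,\bsone}(\cHal_{\bsp,\bssigma_*})$. I do not anticipate any genuine technical obstacle here: the corollary is essentially a specialization of Theorem~\ref{thmerrsob} combined with an existence-by-averaging argument, and the only step requiring mild care is verifying that \eqref{errL2disc} holds shift by shift (not only in expectation), so that squaring and averaging legitimately interchange the worst-case error and the $L_2$-discrepancy of the randomized sequence.
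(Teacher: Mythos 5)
Your proposal is correct and follows essentially the same route the paper intends: the corollary is stated as an immediate consequence of Theorem~\ref{thmerrsob} via the pointwise identity \eqref{errL2disc} specialized to $\bsgamma=\bsone$, with the deterministic shift $\bssigma_*$ obtained by the standard averaging argument. Your explicit remark that \eqref{errL2disc} holds for each fixed shift (so that squaring and taking the expectation legitimately identifies $\widehat{L}_{2,N,\bsone}$ with $\widehat{e}_{N,s}$) is exactly the one point worth verifying, and you handle it correctly.
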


\section{Concluding Remarks}\label{final}

As already pointed out, we could have chosen any anchor $\bsw \in
[0,1]^s$ instead of the anchor $\bsone$ in the definition of the
Sobolev space (see, for example, \cite{DKPS05}). All the formulas
and obtained results would then be more or less the same as the
ones obtained here, with the only difference that (in dimension
one) $r_{p,\gamma,w}(0) =1+\gamma(w^2-w+1/3)$ if $w$ is the
anchor. For $k >0$ the value of $r_{p,\gamma,w}(k)$ is invariant
with respect to changing $w$ and coincides with the one for $r_{p,\gamma}(k)$ (i.e.
$w=0$) as given in Proposition~\ref{fo_shinv_kernel_a}.

Likewise we could have also analyzed the unanchored weighted
Sobolev space whose kernel is given by
$$K(\bsx,\bsy)=\prod_{j=1}^s
\left(1+\gamma_j\left(\frac{B_2(\{x_j-y_j\})}{2}+\left(x_j-\frac{1}{2}\right)\left(y_j-\frac{1}{2}\right)\right)\right),$$
where $B_2(x)=x^2-x+\tfrac{1}{6}$ is the second Bernoulli
polynomial. The inner product in this space is given by $$\langle f,g\rangle = \sum_{\uu
\subseteq [s]} \gamma_{\uu}^{-1} \int_{[0,1]^{|\uu|}}
\left(\int_{[0,1]^{s-|\uu|}} \frac{\partial^{|\uu|} f}{\partial
\bsx_{\uu}}(\bsx) \rd \bsx_{[s]\setminus \uu}\right)
\left(\int_{[0,1]^{s-|\uu|}} \frac{\partial^{|\uu|} g}{\partial
\bsx_{\uu}}(\bsx) \rd \bsx_{[s]\setminus \uu}\right) \rd
\bsx_{\uu},$$
where $\bsx_{[s]\setminus \uu}$ denotes the projection of $\bsx$ onto those components with $j\notin\uu$.

In this case the coefficients $r_{p,\gamma}$ in the series
expansion of the corresponding $\bsp$-adic shift invariant kernel
are given as
\[
  r_{p,\gamma}(k)
= \left\{ \begin{array}{ll}
        1 & \mbox{ if } k = 0, \\
        \frac{\gamma}{2p^{2a}}\left(\frac{1}{\sin^2(\kappa_{a-1}\pi/p)} - \frac{1}{3} \right)
& \mbox{ if } k=\kappa_{a-1}p^{a-1}+\cdots+\kappa_1 p+\kappa_0\\
&  \mbox{ with $\kappa_j \in \{0,1,\ldots,p-1\}$ and
$\kappa_{a-1}\not=0$},
        \end{array}\right.
\]
(cf. Proposition~\ref{fo_shinv_kernel_a}).\\

Finally, let us point out that the Sobolev space
$\cH(K_{s,\bsgamma})$ under consideration is closely linked to
another weighted reproducing kernel Hilbert space $\cH
(K_{s,\bsp,\bsalpha,\bsgamma})$ based on the $\bsp$-adic function
system $B_{\bsp}^s$. The latter is analogous to a so-called
Korobov space, which is based on the trigonometric function system
(see, e.g.,~\cite{K03,SW01}), and to a Walsh space, which is based
on the Walsh function system (see, e.g.,~\cite{DKPS05,DP05}). The
kernel $K_{s,\bsp,\bsalpha,\bsgamma}$ is of product form, i.e.,
$K_{s,\bsp,\bsalpha,\bsgamma}(\bsx,\bsy)=\prod_{j=1}^s
K_{p_j,\alpha_j,\gamma_j}(x_j,y_j)$, where $\bsp,\bsgamma$ are as
before and where $\bsalpha=(\alpha_1,\ldots,\alpha_s)$ with
$\alpha_j >1$. In dimension one the kernel is given by
$$K_{p,\alpha,\gamma}(x,y) =  \sum_{k=0}^\infty r_{p,\alpha,\gamma}(k) \beta_k(x) \overline{\beta_k(y)}\ \ \ \mbox{ for all $ x,y\in [0,1)$,}
$$
where
\begin{equation*}
r_{p,\alpha,\gamma}(k) :=  \left\{
\begin{array}{ll}
1 & \mbox{if } k = 0,\\
\gamma p^{-\alpha \lfloor \log_p (k) \rfloor} & \mbox{if } k \neq
0.
\end{array}
\right.
\end{equation*}
The special case $\cH (K_{p,2,1})$ (i.e., $\alpha=2$ and
$\gamma=1$) has been introduced and analyzed in \cite{P13}. It can
be shown that $K_{p,\alpha,\gamma}=K_{\wal,\gamma}$, where
$K_{\wal,\gamma}$ is the reproducing kernel of the Walsh space of
univariate functions defined in \cite[Section~2.2]{DP05} (see also
\cite{DKPS05}). The corresponding inner product of two functions
$f$ and $g$ on $[0,1]$ is defined by
\[
   \langle f, g \rangle_{p,\alpha,\gamma}
:= \sum_{k \in \NN_0} r_{p,\alpha,\gamma}(k)^{-1} \widehat{f}(k)
\; \overline{\widehat{g}(k)},\ \ \mbox{ where }\ \
   \widehat{f}(k) = \int_0^1 f(x) \; \overline{\beta_{k}(x)} \rd x.
\]

The space $\cH (K_{s,\bsp,\bsalpha,\bsgamma})$ contains functions
that can be represented by $\bsp$-adic function series. For this
particular space we can derive results for unshifted Halton
sequences, i.e., we do not need any randomization method for our
analysis. As in the proof of Theorem~\ref{thmerrsob} one can show
the following result:
\begin{theorem}\label{therrexpl2}
Let $\bsalpha=(\alpha_1,\ldots,\alpha_s)$ with $\alpha_j\ge 2$.
Then for any $N \in \NN$ we have
\begin{equation}\label{korbd1}
e^2_{N,s}(\cHal_{\bsp},K_{s,\bsp,\bsalpha,\bsgamma}) \le
\frac{1}{N^2} \left[\prod_{j=1}^s \left(1+2 \gamma_j p_j^2 (\log
N)\right) +\prod_{j=1}^s\left(1+\gamma_j p_j\right)\prod_{i=1}^s
(1+\gamma_j p_j^2)\right].
\end{equation}
In particular, if $\sum_{j=1}^\infty \gamma_j p_j^2 < \infty$,
then for any $\delta>0$ we have
$$e_{N,s}(\cHal_{\bsp},K_{s,\bsp,\bsalpha,\bsgamma}) \ll_{\delta,\bsgamma,\bsp}  \frac{1}{N^{1-\delta}},$$
where the implied constant is independent of the dimension $s$.

If all $\alpha_j >2$, then \eqref{korbd1} can be improved to
$$e^2_{N,s}(\cHal_{\bsp},K_{s,\bsp,\bsalpha,\bsgamma}) \le \frac{1}{N^2} \left(-1+\prod_{j=1}^s \left(1+2\gamma_j p_j^2\right)\right).$$
\end{theorem}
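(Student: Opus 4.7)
The plan is to mirror the proof of Theorem~\ref{thmerrsob}, exploiting the fact that the kernel $K_{s,\bsp,\bsalpha,\bsgamma}$ is already diagonal in the $\bsp$-adic ONB $\beta_{\bsk}$, so that no randomization is needed. First I would establish the series representation
\[e^2_{N,s}(\cHal_{\bsp},K_{s,\bsp,\bsalpha,\bsgamma}) = \sum_{\bsk \in \NN_0^s \setminus \{\bszero\}} r_{\bsp,\bsalpha,\bsgamma}(\bsk)\, \left|\frac{1}{N}\sum_{n=0}^{N-1} \beta_{\bsk}(\bsx_n)\right|^2,\]
which is the direct analog of Proposition~\ref{propehat}. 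This is immediate from \eqref{fr_wc_err}: since $\beta_{\bszero}\equiv 1$ and $\int \beta_{\bsk}=0$ for $\bsk\neq\bszero$, both $\int\int K$ and $\int K(\bsx_n,\cdot)\,\rd\bsy$ collapse to $r_{\bsp,\bsalpha,\bsgamma}(\bszero)=1$, cancelling perfectly and leaving only the diagonal.

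Next I would split the sum at a truncation parameter $\bsg\in\NN^s$ exactly as in the proof of Theorem~\ref{thmerrsob}: the part with $\bsk\in\Delta_{\bsp}^{\ast}(\bsg)$ is handled via Lemma~\ref{bd_gammasum} together with the block estimate \eqref{bdt}, while the tail $\bsk\notin\Delta_{\bsp}(\bsg)$ is bounded using $|\beta_{\bsk}|\le 1$ and the product structure of $r_{\bsp,\bsalpha,\bsgamma}$. For the main part, decomposing $k_j\in[p_j^{u_j},p_j^{u_j+1})$ so that $r_{p_j,\alpha_j,\gamma_j}(k_j)=\gamma_j p_j^{-\alpha_j u_j}$, the computation reduces to
\[\frac{1}{N^2}\sum_{\emptyset\ne\uu\subseteq[s]} \frac{1}{3}\prod_{j\in\uu}\gamma_j p_j^2 \sum_{u_j=0}^{g_j-1} p_j^{(2-\alpha_j)u_j},\]
whose inner sum is bounded by $g_j$ when $\alpha_j=2$ and by the finite constant $(1-p_j^{2-\alpha_j})^{-1}$ when $\alpha_j>2$.

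For the tail, a one-dimensional geometric sum yields $\sum_k r_{p_j,\alpha_j,\gamma_j}(k)\le 1+\gamma_j p_j$ and $\sum_{k\ge p_j^{g_j}} r_{p_j,\alpha_j,\gamma_j}(k)\le C\gamma_j p_j \cdot p_j^{-g_j}$ whenever $\alpha_j\ge 2$. Using the standard inequality $\prod_j \tilde S_j-\prod_j(\tilde S_j-T_j)\le \prod_j\tilde S_j\,[\prod_j(1+T_j/\tilde S_j)-1]$ and the choice $g_j=\lfloor 2\log_{p_j}N\rfloor$ (so $p_j^{-g_j}\le p_j/N^2$), the tail can be bounded by $\frac{1}{N^2}\prod_j(1+\gamma_j p_j)\prod_j(1+\gamma_j p_j^2)$, which is the second product appearing in \eqref{korbd1}. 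Plugging in the same $g_j$ into the main estimate and using the crude bound $g_j\le 2\log N/\log 2$ absorbed into the constant $2$ gives the first product in \eqref{korbd1}. Under the strict assumption $\alpha_j>2$ I would instead let $g_j\to\infty$: there is then no tail at all, the $u_j$-sum converges absolutely, and grouping the singleton and multiple-coordinate contributions into a single product yields $\frac{1}{N^2}\bigl(-1+\prod_j(1+2\gamma_j p_j^2)\bigr)$ as stated.

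The strong polynomial tractability assertion then follows by copying the final paragraph of the proof of Theorem~\ref{thmerrsob}: the product $\prod_j(1+\gamma_j p_j)(1+\gamma_j p_j^2)$ is bounded by $\exp(\sum_j \gamma_j p_j + \gamma_j^2 p_j^3 + \gamma_j p_j^2)$, finite under $\sum_j \gamma_j p_j^2<\infty$, while the logarithmic product $\prod_j(1+2\gamma_j p_j^2\log N)$ is controlled by the Hickernell--Niederreiter estimate \cite[Lemma~3]{HN03} as $\ll_{\delta,\bsgamma,\bsp} N^{\delta}$. The only real obstacle is careful bookkeeping of constants in the dyadic sums, because the absence of the factor $1/(2p^{2a})$ present in the Sobolev kernel means the geometric series over $u_j$ no longer converges automatically and the case distinction $\alpha_j=2$ versus $\alpha_j>2$ must be tracked throughout; conceptually, however, every tool required---the ONB expansion, Lemma~\ref{bd_gammasum}, the estimate \eqref{bdt}, and the convergence argument---is already in place from Theorem~\ref{thmerrsob}.
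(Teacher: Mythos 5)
Your overall strategy is exactly the one the paper intends: the paper gives no proof of Theorem~\ref{therrexpl2} beyond the remark ``as in the proof of Theorem~\ref{thmerrsob}'', and your proposal carries that out faithfully --- the diagonal representation $e^2_{N,s}=\sum_{\bsk\neq\bszero}r_{\bsp,\bsalpha,\bsgamma}(\bsk)\bigl|\tfrac1N\sum_n\beta_{\bsk}(\bsx_n)\bigr|^2$ from \eqref{fr_wc_err}, the truncation at $\Delta_{\bsp}(\bsg)$, Lemma~\ref{bd_gammasum} combined with \eqref{bdt} for the main part, and the geometric tail with $g_j=\lfloor 2\log_{p_j}N\rfloor$. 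Your derivation of \eqref{korbd1} and of the tractability assertion checks out: for $\alpha_j\ge 2$ one has $\sum_{k\ge 0}r_{p_j,\alpha_j,\gamma_j}(k)\le 1+\gamma_j p_j$ and $\sum_{k\ge p_j^{g_j}}r_{p_j,\alpha_j,\gamma_j}(k)\le \gamma_j p_j\,p_j^{-g_j}\le \gamma_j p_j^2/N^2$ (both with equality at $\alpha_j=2$), and $\tfrac13 g_j\gamma_j p_j^2\le 2\gamma_j p_j^2\log N$ since $g_j\le 2\log N/\log 2$.

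The genuine gap is in the improved bound for $\alpha_j>2$. Following your route, the coordinate-$j$ factor of the untruncated main sum is
\begin{equation*}
\frac13\,\gamma_j p_j^2\sum_{u\ge 0}p_j^{(2-\alpha_j)u}=\frac{\gamma_j p_j^2}{3\left(1-p_j^{2-\alpha_j}\right)},
\end{equation*}
and you pass from ``the finite constant $(1-p_j^{2-\alpha_j})^{-1}$'' directly to the constant $2$ in $\prod_j(1+2\gamma_j p_j^2)$. That step does not follow: $(1-p_j^{2-\alpha_j})^{-1}\to\infty$ as $\alpha_j\downarrow 2$ (for $p_j=2$ and $\alpha_j=2.001$ it is of order $10^3$), and $\tfrac13(1-p_j^{2-\alpha_j})^{-1}\le 2$ holds only when $p_j^{\alpha_j-2}\ge 6/5$. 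What your argument actually yields is $\tfrac{1}{N^2}\bigl(-1+\prod_j(1+c_j\gamma_j p_j^2)\bigr)$ with $c_j=\tfrac13(1-p_j^{2-\alpha_j})^{-1}$, i.e.\ a constant depending on $\bsalpha$. You need to either state the improved bound with this $\bsalpha$-dependent constant, or add a hypothesis guaranteeing $p_j^{\alpha_j-2}\ge 6/5$; as written, ``grouping the contributions yields $-1+\prod_j(1+2\gamma_j p_j^2)$'' is unjustified. (This imprecision is arguably inherited from the theorem statement itself, which the paper leaves unproved, but your write-up must close or at least flag it.)
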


For the kernel $K_{s,\bsp,\bsalpha,\bsgamma}$ it can be easily
shown that condition \eqref{condHickKri} is satisfied with
$c_{K_{s,\bsp,\bsalpha,\bsgamma}}=-1+\prod_{j=1}^s \big(1+\gamma_j
\tfrac{p_j^{\alpha_j}(p_j-1)}{p_j^{\alpha_j}-p_j}\big)$. Hence the
obtained convergence rates are optimal in the order of magnitude
in $N$. Apart from providing an optimal convergence rate, we would like to
stress that the integration rule used in Theorem~\ref{therrexpl2} is fully explicit:
no construction algorithm or randomization is needed.

\section*{Appendix: Computation of the $\bsp$-adic shift invariant kernel}

In this Appendix we are going to prove
Proposition~\ref{fo_shinv_kernel_a}. To begin, we need some
preparations.

A first auxiliary result is an analogue of
\cite[Corollary~A.13]{DP10} which is easily shown in the same
fashion, using that the $\bsp$-adic shift is measure preserving.
\begin{lemma}\label{lemintshift}
Let $\bssigma\in [0,1)^s$ and let $\bsp$ be a vector of integers
greater than or equal to 2. Then for all $f\in L_2([0,1]^s)$ we
have
\[\int_{[0,1]^s} f(\bsx)\rd\bsx=\int_{[0,1]^s} f(\bsx\oplus_{\bsp}\bssigma)\rd\bsx.\]
\end{lemma}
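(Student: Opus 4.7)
The plan is to prove the lemma by showing that the $\bsp$-adic shift map $T_{\bssigma}:\bsx\mapsto\bsx\oplus_{\bsp}\bssigma$ is Lebesgue measure-preserving on $[0,1)^s$; the stated integral identity then follows immediately from the standard change-of-variables formula for a measure-preserving transformation. Because $T_{\bssigma}$ acts coordinate-wise, Fubini's theorem reduces the task to the one-dimensional claim: for each prime $p\ge 2$ and each fixed $\sigma\in[0,1)$, the map $T_\sigma:x\mapsto x\oplus_p\sigma$ preserves Lebesgue measure on $[0,1)$.

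The one-dimensional step will rest on the carry structure of $p$-adic addition. When we add $\phi_p^+(x)=\sum_{r\ge 0}x_r p^r$ and $\phi_p^+(\sigma)=\sum_{r\ge 0}\sigma_r p^r$ in $\ZZ_p$, carries propagate from lower digit indices to higher ones, so the $r$-th digit of the sum depends only on $(x_0,\dots,x_r)$ and $(\sigma_0,\dots,\sigma_r)$. Consequently, for every $n\in\NN$, the first $n$ digits of $x\oplus_p\sigma$ are determined by the first $n$ digits of $x$ and of $\sigma$. Since a $p$-adic cylinder interval $I_m^{(n)}:=[mp^{-n},(m+1)p^{-n})$ with $0\le m<p^n$ is precisely the set of $x\in[0,1)$ whose first $n$ base-$p$ digits form a prescribed tuple (up to the measure-zero set of $p$-adic rationals), $T_\sigma$ permutes the family $\{I_m^{(n)}:0\le m<p^n\}$ and hence preserves Lebesgue measure on each such interval.

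Since the cylinder intervals form a $\pi$-system generating the Borel $\sigma$-algebra of $[0,1)$, Dynkin's $\pi$--$\lambda$ theorem extends measure preservation to every Borel set. The integral identity then holds for indicator functions, extends by linearity to simple functions, and by dominated convergence to all of $L_2([0,1]^s)$ (in fact all of $L_1$). The only technicality is the measure-zero set of $p$-adic rationals, where $\phi_p^+$ and $\phi_p$ involve the finite-representation convention; since this set is Lebesgue null in every dimension, it poses no genuine obstacle. The essential step is really the digit-wise carry-propagation observation, which is what distinguishes $\bsp$-adic shifts from, say, shifts modulo one and ensures that each level of the $p$-adic cylinder partition is permuted by $T_\sigma$.
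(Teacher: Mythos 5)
Your proposal is correct and follows essentially the same route as the paper, which disposes of this lemma by remarking that it is the analogue of \cite[Corollary~A.13]{DP10} and follows from the fact that the $\bsp$-adic shift is measure preserving. Your carry-propagation argument showing that $x\mapsto x\oplus_p\sigma$ permutes the level-$n$ cylinder intervals (up to the null set of $p$-adic rationals), followed by the $\pi$--$\lambda$ extension and the coordinate-wise reduction, is exactly the justification of that measure-preservation claim that the paper leaves implicit.
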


Furthermore, we need the following result which is the $p$-adic
and slightly generalized version of \cite[Theorem A.2]{DP10}. To this end, we define for
$f:[0,1]^2\To\CC$ and $k,l\in\NN_0^s$,
$$\widehat{f}(k,l):=\int_0^1 \int_0^1 f(x,y) \overline{\beta_{k}(x)}\beta_l (y)\rd x\rd y.$$

\begin{lemma}\label{lemabsconv}
Let $f:[0,1]^2\To\CC$, let $p$ be a prime number, and assume that
the following assumptions are satisfied:
\begin{itemize}
 \item $\sum_{k,l=0}^\infty |\widehat{f}(k,l)|<\infty$,
 \item $f(x,y)$ is continuous in $(x,y)$ if $x$ and $y$ are $p$-adic irrationals,
 \item $f(x,y)$ is continuous from above in $(x,y)$ if $x$ and $y$ are $p$-adic rationals.
 \item $f(x,y)$ is continuous in the first variable and continuous from above in the second variable
 if $x$ is a $p$-adic irrational and $y$ is a $p$-adic rational.
 \item $f(x,y)$ is continuous from above in the first variable and continuous in the second variable
 if $x$ is a $p$-adic rational and $y$ is a $p$-adic irrational.
\end{itemize}

Then $\sum_{k,l=0}^\infty \widehat{f}(k,l) \beta_k (x)
\overline{\beta_l(y)}$ converges uniformly to $f(x,y)$, and we
have
\[f(x,y)=\sum_{k,l=0}^\infty \widehat{f}(k,l)\beta_k (x)\overline{\beta_l(y)}\ \ \ \mbox{ for all $x,y\in [0,1).$}\]
\end{lemma}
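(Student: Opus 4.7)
The plan is to define $g(x,y):=\sum_{k,l=0}^\infty \widehat{f}(k,l)\beta_k(x)\overline{\beta_l(y)}$, prove that this series converges uniformly to a well-defined function $g$ on $[0,1)^2$, and then show that $g$ agrees with $f$ everywhere by a two-stage argument: first in $L_2$ by matching $\bsp$-adic Fourier coefficients, and then pointwise by exploiting the four prescribed continuity properties.

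For uniform convergence, since $|\beta_k(x)\overline{\beta_l(y)}|=1$ for all indices and all $(x,y)$, the hypothesis $\sum_{k,l}|\widehat{f}(k,l)|<\infty$ lets the Weierstrass M-test do the job, producing absolute and uniform convergence on $[0,1)^2$. To show $g=f$, I would first compute the $\bsp$-adic Fourier coefficients of $g$ by interchanging sum and integral (justified by the uniform convergence) and applying orthonormality of the tensor system $\{\beta_k(x)\overline{\beta_l(y)}\}$ in $L_2([0,1]^2)$, which is an immediate tensor-product consequence of Proposition~\ref{ONB_prop}. This yields $\widehat{g}(m,n)=\widehat{f}(m,n)$ for every $m,n\in\NN_0$, so $f-g$ has vanishing Fourier coefficients, and the basis property of $B_{\bsp}^{(2)}$ forces $f=g$ almost everywhere on $[0,1)^2$.

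The nontrivial step is promoting this a.e.\ equality to pointwise equality, which is where the continuity hypotheses enter. Each $\beta_k$ is continuous at $p$-adic irrationals and continuous from above at $p$-adic rationals, and an $\eps/3$-argument shows that uniform convergence transfers these properties to the limit; hence $g$ satisfies exactly the same four joint continuity conditions assumed for $f$. Let $D\subseteq[0,1)^2$ denote the set of pairs both of whose coordinates are $p$-adic irrational; its complement lies in a countable union of horizontal and vertical lines and is therefore a Lebesgue null set. Consequently $f=g$ on a dense subset of $D$, and continuity at every point of $D$ of both $f$ and $g$ upgrades this to $f=g$ throughout $D$. Finally, for a pair $(x_0,y_0)\notin D$ I would choose a sequence $(x_n,y_n)\in D$ converging to $(x_0,y_0)$ and approaching from above in any coordinate that is a $p$-adic rational; such sequences exist because the $p$-adic irrationals are dense in every half-open subinterval $(x_0,x_0+\delta)$. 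The corresponding one-sided continuity property (the third, fourth, or fifth bullet of the hypothesis, depending on the case) applied to both $f$ and $g$ then yields $f(x_0,y_0)=g(x_0,y_0)$.

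The main obstacle is the last step. Steps one and two are essentially formal, but step three requires careful bookkeeping to match each of the four continuity assumptions on $f$ with an approach sequence of the correct one-sided type entirely inside $D$. The most delicate case is the one where both coordinates of $(x_0,y_0)$ are $p$-adic rational, since the approximating sequence must approach strictly from above in both variables at once; however, this is not a serious obstruction because $D\cap((x_0,x_0+\delta)\times(y_0,y_0+\delta))$ has full measure and is therefore nonempty for every $\delta>0$.
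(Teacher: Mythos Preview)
Your argument is correct and takes a genuinely different route from the paper's. The paper never forms the candidate limit $g$ at all; instead it evaluates the rectangular partial sums $\sum_{k=0}^{p^u-1}\sum_{l=0}^{p^v-1}\widehat{f}(k,l)\beta_k(x)\overline{\beta_l(y)}$ directly, using the Dirichlet-kernel identity $\sum_{k=0}^{p^u-1}\beta_k(x)\overline{\beta_k(t)}=p^u\mathbf 1_{A_u}(t)$ with $A_u=[p^{-u}\lfloor p^u x\rfloor,\,p^{-u}\lfloor p^u x\rfloor+p^{-u})$ to rewrite the partial sum as the box average $p^{u+v}\int_{A_u}\int_{A_v}f(t,s)\rd t\rd s$. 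The four continuity hypotheses are then invoked once, to pass from these shrinking averages to $f(x,y)$; absolute summability finishes by turning pointwise convergence along $p$-adic blocks into uniform convergence of the full double series.

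Your approach is the standard Fourier-analytic template: define $g$, match coefficients, upgrade. It is more modular and reuses Proposition~\ref{ONB_prop} cleanly, but it costs you the extra work of checking that $g$ inherits the same one-sided continuity pattern and of running the density/approach-sequence argument at the end. The paper's approach avoids both of these because it never needs to know anything about $g$ as an independent object. One small point to tighten in your write-up: when you invoke ``the basis property of $B_{\bsp}^{(2)}$'' to get $f=g$ a.e., you are implicitly using $f\in L_2$, which is not among the stated hypotheses. The conclusion still holds for $f\in L_1$ (which is the minimal assumption needed for $\widehat f$ to make sense), because the finite linear span of the $\beta_k\overline{\beta_l}$ contains all indicators of $p$-adic boxes and these generate the Borel $\sigma$-algebra; you should say this explicitly rather than appeal to the $L_2$ ONB property.
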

\begin{proof}
We proceed similarly to the proof of \cite[Theorem~A.20]{DP10}.
Suppose that all assumptions in the lemma are satisfied, and let
$x,y\in [0,1)$ be given. In the following we write $\mathrm{e}(t)$ for $\mathrm{e}^{2\pi\icomp t}$.

For $u,v \in \NN_0$ consider the
expression
\begin{eqnarray*}
 \sum_{k=0}^{p^u-1}\sum_{l=0}^{p^v-1}\widehat{f}(k,l)\beta_k (x)\overline{\beta_l(y)}&=&
\int_0^1\int_0^1 f(t,s) \sum_{k=0}^{p^u-1}
\beta_k (x)\overline{\beta_k (t)} \sum_{l=0}^{p^v-1} \overline{\beta_l(y)}\beta_l (s)\rd t \rd s\\
 &=&\int_0^1 \int_0^1 f(t,s) \sum_{k=0}^{p^u-1} \mathrm{e} (\phi_p (k) (\phi_p^+(x)-\phi_p^+(t)))\\
&&\hspace{2.2cm}\times\sum_{l=0}^{p^v-1} \mathrm{e} (\phi_p (l)
(\phi_p^+(s)-\phi_p^+(y)))\rd t \rd s.
\end{eqnarray*}
Let $A_u:=[p^{-u} \lfloor p^u x\rfloor, p^{-u}\lfloor p^u x
\rfloor +p^{-u})$, and let $A_v:=[p^{-v} \lfloor p^v y\rfloor,
p^{-v}\lfloor p^v y \rfloor +p^{-v})$. It is then easily checked by
inserting into the definitions of $\phi$ and $\phi^+$ that
\[\sum_{k=0}^{p^u-1} \mathrm{e} (\phi_p (k) (\phi_p^+(x)-\phi_p^+(t)))=\begin{cases}
                                                                        0 & \mbox{if $t\not\in A_u$,}\\
                                    p^u&\mbox{if $t\in A_u$,}
                                                                       \end{cases}
\]
\[\sum_{l=0}^{p^v-1} \mathrm{e} (\phi_p (l) (\phi_p^+(s)-\phi_p^+(y)))=\begin{cases}
                                                                        0 & \mbox{if $s\not\in A_v$,}\\
                                    p^v&\mbox{if $s\in A_v$,}
                                                                       \end{cases}
\]
Consequently,
\[ \sum_{k=0}^{p^u-1}\sum_{l=0}^{p^v-1}\widehat{f}(k,l)\beta_k (x)\overline{\beta_l(y)}=p^{u+v} \int_{A_u}\int_{A_v} f(t,s)\rd t \rd s.\]
Note that if $x$ (and likewise $y$) is a $p$-adic rational, then
$p^{-u} \lfloor p^u x\rfloor=x$ (and likewise $p^{-v} \lfloor p^v
y\rfloor=y$) for sufficiently large $u$ (and likewise $v$). Hence
we can use continuity of $f$ in the $p$-adic irrationals and
continuity from above in the $p$-adic rationals to see that
$\sum_{k=0}^{p^u-1}\sum_{l=0}^{p^v-1}\widehat{f}(k,l)\beta_k
(x)\overline{\beta_l(y)}$ converges to $f(x,y)$ as $u$ and $v$
tend to infinity.

Since we assumed $\sum_{k,l=0}^\infty |\widehat{f}(k,l)|<\infty$,
the partial sums
$\sum_{k=0}^{U}\sum_{l=0}^{V}\widehat{f}(k,l)\beta_k
(x)\overline{\beta_l(y)}$ are a Cauchy sequence and this implies
uniform convergence to $f(x,y)$.
\end{proof}
The next lemma is analogous to \cite[Lemma~12.2]{DP10}.

\begin{lemma}\label{lemshikernel}
Let the reproducing kernel $K_s \in L_2([0,1]^{2s})$ be of product
form $K_s(\bsx,\bsy)=\prod_{j=1}^s K_j(x_j,y_j)$ and continuous.
For $\bsk\in\NN_0^s$ let
\[\widehat{K}_s(\bsk,\bsk)=\int_{[0,1]^s} \int_{[0,1]^s} K_s
(\bsx,\bsy) \overline{\beta_{\bsk}(\bsx)} \beta_{\bsk}(\bsy)
\rd\bsx\rd\bsy.\] If
\[\sum_{\bsk\in\NN_0^s} |\widehat{K}_s(\bsk,\bsk)|<\infty,\]
then the $\bsp$-adic shift-invariant kernel $K_{\sh}$ is given by
\[K_{\sh}(\bsx,\bsy)=\sum_{\bsk\in\NN_0^s} \widehat{K}_s(\bsk,\bsk)\beta_{\bsk}(\bsx)\overline{\beta_{\bsk}(\bsy)}.\]
\end{lemma}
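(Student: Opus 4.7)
The plan is to identify both sides by matching their expansions with respect to the $L_2([0,1]^{2s})$-orthonormal system $\{\beta_{\bsk}(\bsx)\overline{\beta_{\bsl}(\bsy)}\}_{\bsk,\bsl\in\NN_0^s}$, which is an ONB by Proposition~\ref{ONB_prop} applied componentwise. The averaging with respect to the $\bsp$-adic shift will force the ``Fourier matrix'' of $K_{\sh}$ to be diagonal, with the prescribed entries $\widehat{K}_s(\bsk,\bsk)$.

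The computational core rests on the character identity
$$\beta_{\bsk}(\bsx\oplus_{\bsp}\bssigma)=\beta_{\bsk}(\bsx)\,\beta_{\bsk}(\bssigma),$$
which is immediate from $\beta_k(x)=\exp(2\pi\icomp\,\phi_p(k)\phi_p^+(x))$ together with the fact that $\phi_p^+$ intertwines $\oplus_p$ with ordinary addition in $\ZZ_p$ (off a measure-zero set). Starting from $K_{\sh}(\bsx,\bsy)=\int K_s(\bsx\oplus_{\bsp}\bssigma,\bsy\oplus_{\bsp}\bssigma)\rd\bssigma$, multiplying by $\overline{\beta_{\bsk}(\bsx)}\beta_{\bsl}(\bsy)$ and integrating, I would substitute $\bsu=\bsx\oplus_{\bsp}\bssigma$ and $\bsv=\bsy\oplus_{\bsp}\bssigma$ in the inner integrals. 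By Lemma~\ref{lemintshift} the substitution is measure-preserving, and the character identity yields $\overline{\beta_{\bsk}(\bsu\oplus_{\bsp}(-\bssigma))}=\overline{\beta_{\bsk}(\bsu)}\beta_{\bsk}(\bssigma)$ and the analogue for $\bsl$, so the $\bssigma$-dependence factors out cleanly. Orthonormality then collapses the $\bssigma$ integral to $\delta_{\bsk,\bsl}$, giving
$$\int_{[0,1]^s}\int_{[0,1]^s}K_{\sh}(\bsx,\bsy)\overline{\beta_{\bsk}(\bsx)}\beta_{\bsl}(\bsy)\rd\bsx\rd\bsy=\widehat{K}_s(\bsk,\bsk)\,\delta_{\bsk,\bsl}.$$

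Under the absolute summability hypothesis the candidate series $\widetilde K(\bsx,\bsy):=\sum_{\bsk}\widehat{K}_s(\bsk,\bsk)\beta_{\bsk}(\bsx)\overline{\beta_{\bsk}(\bsy)}$ converges absolutely and uniformly (since $|\beta_{\bsk}|=1$) and, by term-by-term integration, has the same ``Fourier matrix'' as $K_{\sh}$; hence $K_{\sh}=\widetilde K$ in $L_2([0,1]^{2s})$. The main obstacle I anticipate is upgrading this $L_2$-identity to pointwise equality: the $\bsp$-adic shift is not continuous in the Euclidean topology, so continuity of $K_{\sh}$ cannot be read off directly from that of $K_s$. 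The intended remedy is to exploit the product form $K_s(\bsx,\bsy)=\prod_{j=1}^s K_j(x_j,y_j)$, which factorizes $K_{\sh}=\prod_{j=1}^s K_{j,\sh}$ into one-dimensional $p_j$-adic shift-invariant kernels, and then to apply Lemma~\ref{lemabsconv} to each factor; the required (one-sided) continuity hypotheses for $K_{j,\sh}$ are inherited from continuity of $K_j$ combined with the $\sigma_j$-averaging, and this upgrades $K_{\sh}=\widetilde K$ to the pointwise identity claimed.
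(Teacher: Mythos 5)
Your proposal is correct and follows essentially the same route as the paper's proof: the character identity together with measure preservation (Lemma~\ref{lemintshift}) and orthonormality diagonalizes the Fourier matrix of $K_{\sh}$ with entries $\widehat{K}_s(\bsk,\bsk)$, and the pointwise identity is then obtained by factorizing into one-dimensional kernels and applying Lemma~\ref{lemabsconv} to each factor, with the one-sided continuity of $K_{j,\sh}$ deduced from the continuity properties of $x\mapsto x\oplus_p\sigma$ and dominated convergence, exactly as you indicate.
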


\begin{proof}
 Using the definition of $K_{\sh}$, we see that for $\bsk,\bsk'\in\NN_0^s$
\begin{eqnarray*}
 \widehat{K}_{\sh}(\bsk,\bsk')&:=&\int_{[0,1]^{2s}} K_{\sh}(\bsx,\bsy)\overline{\beta_{\bsk}(\bsx)}\beta_{\bsk'}(\bsy)\rd\bsx\rd\bsy\\
&=&\int_{[0,1]^{2s}} \int_{[0,1]^s} K_s(\bsx\oplus_{\bsp}
\bssigma,\bsy\oplus_{\bsp}\bssigma)\rd\bssigma
\overline{\beta_{\bsk}(\bsx)}\beta_{\bsk'}(\bsy)\rd\bsx\rd\bsy\\
&=&\int_{[0,1]^{s}} \int_{[0,1]^{2s}} K_s(\bsx\oplus_{\bsp}
\bssigma,\bsy\oplus_{\bsp}\bssigma)
\overline{\beta_{\bsk}(\bsx)}\beta_{\bsk'}(\bsy)\rd\bsx\rd\bsy\rd\bssigma.
\end{eqnarray*}
We now define an inverse $\ominus_{\bsp}$ of the digital shift
operator $\oplus_{\bsp}$ as follows: for a point
$\bsx=(x_1,\ldots,x_s)\in [0,1)^s$ and a given
$\bssigma=(\sigma_1,\ldots,\sigma_s)\in [0,1)^s$, we define
$\bsx\ominus_{\bsp}\bssigma\in [0,1)^s$ to be
\[\bsx\ominus_{\bsp}\bssigma=(x_1\ominus_{p_1}\sigma_1,\ldots,x_s\ominus_{p_s}\sigma_s),\]
where $x_j\ominus_{p_j}\sigma_j=\phi_{p_j} (\phi_{p_j}^+ (x_j) -
\phi_{p_j}^+ (\sigma_j))$. Here $\phi_{p_j}^+ (x_j) - \phi_{p_j}^+
(\sigma_j)$ means subtraction in $\ZZ_{p_j}$.

For any prime $p$, it is known that $\phi_p^+ \circ
\phi_p=\mathrm{id}$ on the set of $p$-adic numbers with infinitely many digits different from $p-1$. Hence,
except for a set of measure zero we have
\begin{eqnarray*}
(x\ominus_p \sigma) \oplus \sigma&=& \phi_p (\phi_p^+ (x\ominus_p \sigma) + \phi_p^+ (\sigma))\\
&=&\phi_p (\phi_p^+ (\phi_p (\phi_p^+ (x) - \phi_p^+(\sigma))) + \phi_p^+ (\sigma))\\
&=&\phi_p (\phi_p^+ (x)-\phi_p^+ (\sigma) +\phi_p^+ (\sigma))\\
&=&\phi_p (\phi_p^+ (x))\\
&=&x,
\end{eqnarray*}
and this property can be carried over to the $s$-dimensional case.
Therefore, we can apply Lemma~\ref{lemintshift}, and obtain

\begin{eqnarray*}
 \widehat{K}_{\sh}(\bsk,\bsk')&=&\int_{[0,1]^{s}} \int_{[0,1]^{2s}} K_s(\bsx,\bsy)
\overline{\beta_{\bsk}(\bsx\ominus_{\bsp}\bssigma)}\beta_{\bsk'}(\bsy\ominus_{\bsp}\bssigma)\rd\bsx\rd\bsy\rd\bssigma\\
&=&\int_{[0,1]^{s}} \int_{[0,1]^{2s}} K_s(\bsx,\bsy)
\overline{\beta_{\bsk}(\bsx)}\beta_{\bsk'}(\bsy)\beta_{\bsk}(\bssigma)\overline{\beta_{\bsk'}(\bssigma)}\rd\bsx\rd\bsy\rd\bssigma\\
&=&\int_{[0,1]^{2s}} K_s(\bsx,\bsy)
\overline{\beta_{\bsk}(\bsx)}\beta_{\bsk'}(\bsy)\rd\bsx\rd\bsy\int_{[0,1]^{s}}
\beta_{\bsk}(\bssigma)\overline{\beta_{\bsk'}(\bssigma)}\rd\bssigma\\
&=& \left\{
\begin{array}{ll}
\widehat{K}_s (\bsk,\bsk) & \mbox{ if } \bsk=\bsk',\\
0 & \mbox{ if } \bsk\not=\bsk',
\end{array}\right.
\end{eqnarray*}
where we used Proposition~\ref{ONB_prop} in the last step.

Under the assumption that $\sum_{\bsk\in\NN_0^s}
|\widehat{K}_s(\bsk,\bsk)|<\infty$, we obtain that
\[\sum_{\bsk\in\NN_0^s} \widehat{K}_s(\bsk,\bsk)\beta_{\bsk}(\bsx)\overline{\beta_{\bsk}(\bsy)}\]
converges for all $\bsx,\bsy\in [0,1)^s$.

In the next step we are going to study the continuity properties
of $K_{\sh}$. By definition,
\[K_{\sh}(\bsx,\bsy)=\int_{[0,1]^s} K_s(\bsx\oplus_{\bsp} \bssigma,\bsy\oplus_{\bsp}\bssigma)\rd\bssigma=
\prod_{j=1}^s \int_0^1 K_j(x_j\oplus_{p_j}
\sigma_j,y_j\oplus_{p_j}\sigma_j)\rd\sigma_j.\] Hence, we are
going to restrict ourselves to considering the one-dimensional
case in the following, and in order to keep notation simple, we
are going to omit the indices $j$ for the moment.

We first show that, for fixed $\sigma$, the mapping $x\mapsto
x\oplus_p \sigma$ is continuous from above in any $x\in[0,1)$. We
recall that for $x\in[0,1)$ we always consider the unique base
$p$ representation of $x$ such that the $p$-adic rationals have a
finite expansion. Let $\varepsilon>0$ be given and choose $k$ such
that $p^{-k}<\varepsilon$. We now choose $\delta< p^{-k}$ such that all $y\in
[x,x+\delta)\cap [0,1)$ share the first $k$ base $p$ digits with
$x$. Indeed, such a $\delta$ can always be found, as we show now.

In the first case, suppose that $x$ is a $p$-adic rational. Then the base $p$ representation is finite, i.e., there is
some $R\in\NN$ such that $x=\sum_{r=0}^R \frac{x_r}{p^{r+1}}$. We then choose $\delta=p^{-K-1}$, where $K>\max\{R,k\}$. With this choice of $\delta$, for any
$y\in [x,x+\delta)$ we then have
$$y= \sum_{r=0}^R \frac{x_r}{p^{r+1}} + \sum_{r=K+1}^{\infty} \frac{y_r}{p^{r+1}}$$
for some $y_{K+1},y_{K+2},\ldots$ in $\{0,1,\ldots,p-1\}$.

On the other hand, suppose that $x$ is a $p$-adic irrational. Then $x$ has an infinite base $p$ expansion,
but infinitely many of the digits $x_r$ are different from $p-1$. We then choose an index $K>k$ such that $x_K< p-1$, and again
take $\delta=p^{-K-1}$. If we add a real number less than $\delta$ to $x$, then this has no influence on the first $k$ digits of $x$, since
$x_K< p-1$. Hence, also in this case, for any
$y\in [x,x+\delta)$ $y$ shares the first $k$ digits with $x$. This shows that we can always find a $\delta$ with the properties stated above.

If all $y\in
[x,x+\delta)\cap [0,1)$ share the first $k$ base $p$ digits with
$x$, this however implies that also the integers $\phi_p^+ (x)$
and $\phi_p^+ (y)$ share the first $k$ base $p$ digits, and so
also the first $k$ digits of $\phi_p^+ (x)+\phi_p^+ (\sigma)$ and
$\phi_p^+ (y)+\phi_p^+ (\sigma)$ coincide. Applying $\phi_p$ to
these expressions yields that $\abs{x\oplus_p \sigma - y\oplus_p
\sigma}<\varepsilon$. Hence, continuity from above of $x\mapsto
x\oplus_p \sigma$ is shown.

If $x\in [0,1)$ is a $p$-adic irrational, i.e., has infinitely
many base $p$ digits different from $p-1$, then we can apply the
same procedure as just outlined to show continuity of $x\mapsto
x\oplus_p \sigma$ from below.

Noting that $K$ is a continuous function in both variables, we see
that, for fixed $\sigma$,

\begin{itemize}
\item $K(x\oplus_p \sigma,y\oplus_p \sigma)$ is continuous in
$(x,y)$ if $x$ and $y$ are $p$-adic irrationals,
 \item $K(x\oplus_p \sigma,y\oplus_p \sigma)$ is continuous from above in $(x,y)$ if $x$ and $y$ are $p$-adic rationals.
 \item $K(x\oplus_p \sigma,y\oplus_p \sigma)$ is continuous in the first variable and continuous from above in the second variable
 if $x$ is a $p$-adic irrational and $y$ is a $p$-adic rational.
 \item $K(x\oplus_p \sigma,y\oplus_p \sigma)$ is continuous from above in the first variable and continuous in the second variable
 if $x$ is a $p$-adic rational and $y$ is a $p$-adic irrational.
\end{itemize}

Note furthermore, that $K(x\oplus_p \sigma,y\oplus_p \sigma)$ is
bounded, so we can apply the dominated convergence theorem to
obtain that $K_{\sh}^{(1)}:=\int_0^1 K(x\oplus_p \sigma,y\oplus_p
\sigma)\rd\sigma$ satisfies the same continuity properties as
$K(x\oplus_p \sigma,y\oplus_p \sigma)$ for fixed $\sigma$.

Applying Lemma~\ref{lemabsconv} to $K_{\sh}^{(1)}$ yields that
$K_{\sh}^{(1)}$ can be represented by an absolutely convergent
$B_{p}^{(1)}$-series.

Finally, note that $K_{\sh}$ is the product of one-dimensional
kernels of the form $K_{\sh}^{(1)}$. This yields the result.
\end{proof}

Now we can give the proof of Proposition~\ref{fo_shinv_kernel_a}.

\begin{proof}
The proof follows the lines of the proof of \cite[Proposition~12.5]{DP10}.

Note that $K_{s,\bsgamma} \in L_2([0,1]^{2s})$. As
$K_{s,\bsgamma}$ is of product form,
$K_{s,\bsgamma}(\bsx,\bsy)=\prod_{j=1}^s K_{j,\gamma_j}(x_j,y_j)$,
we only need to find the $p_j$-adic shift invariant kernels
$K_{{\sh},j}$ associated with $K_{j,\gamma_j}$. The $\bsp$-adic
shift invariant kernel in dimension $s>1$ is then just the product
of the $K_{\sh,j}$.

We have to compute (the indices $j$ for the dimension are omitted)
$$\widehat{K}_{\gamma}(k,k)=\int_0^1 \int_0^1 (1+\gamma
\min(1-x,1-y)) \overline{\beta_k(x)} \beta_k(y) \rd x \rd y.$$

First, it is easy to show that
$$\widehat{K}_{\gamma}(0,0)=1+\frac{\gamma}{3}.$$

Now we turn to the case $k \not=0$. Note that
$\min(1-x,1-y)=1-\max(x,y)=1-\tfrac{1}{2}(x+y+|x-y|)$. Since for
$k \not=0$ we have $\int_0^1 \beta_k(x)\rd x=0$ and  $\int_0^1
\int_0^1 \overline{\beta_k(x)}\beta_k(y) \rd x \rd y=0$ it follows
that
\begin{eqnarray}\label{khat1}
\widehat{K}_{\gamma}(k,k) & = &  -\frac{\gamma}{2} \int_0^1 \int_0^1 (x+y+|x-y|)\ \overline{\beta_k(x)} \beta_k(y) \rd x \rd y\nonumber \\
& = & -\frac{\gamma}{2} \int_0^1 \int_0^1 |x-y|\
\overline{\beta_k(x)} \beta_k(y) \rd x \rd y.
\end{eqnarray}

Let again ${\rm e}(x)={\rm e}^{2 \pi \icomp x}$. Let $k = \kappa_{a-1}p^{a-1}
+ \cdots + \kappa_1 p + \kappa_0$, where $a$ is such that
$\kappa_{a-1} \neq 0$. Note that for $x\in [0,1)$ of the form
$x=\frac{x_1}{p}+\frac{x_2}{p^2}+\cdots+\frac{x_a}{p^a}+\cdots$ we
have
\begin{eqnarray*}
\beta_k(x) & = & {\rm e}\left(\left(\frac{\kappa_0}{p}+\frac{\kappa_1}{p^2}+\cdots +\frac{\kappa_{a-1}}{p^a}\right)
(x_1+x_2 p+\cdots +x_a p^{a-1}+\cdots)\right)\\
& = & {\rm e}\left(\frac{\kappa_0 x_1+\kappa_1 x_2+\cdots+\kappa_{a-1}
x_a}{p}+\frac{\kappa_1 x_1+\kappa_2 x_2+\cdots +\kappa_{a-1}
x_{a-1}}{p^2}+\cdots +\frac{\kappa_{a-1} x_1}{p^a}\right).
\end{eqnarray*}

For $u,v\in \{0,1,\ldots,p^a-1\}$ let $u = u_{1} p^{a-1} + \cdots
+ u_{a-1} p + u_a$ and $v = v_1 p^{a-1} + \cdots + v_{a-1} p +
v_a$ be the $p$-adic expansions of $u$ and $v$, respectively. Then
\begin{eqnarray*}
\tau_p(k) &:=&  \int_0^1 \int_0^1 |x-y| \ \overline{\beta_k(x)}  \beta_k(y) \rd x \rd y \\
&=&  \sum_{u=0}^{p^a-1} \sum_{v=0}^{p^a-1} {\rm e}\left(-\left(\frac{\kappa_0 u_1+\cdots+
\kappa_{a-1} u_a}{p}+\cdots +\frac{\kappa_{a-1} u_1}{p^a}\right)\right)\\
&&\hspace{1cm} \mbox{}\times {\rm e}\left(\frac{\kappa_0 v_1+\cdots+\kappa_{a-1} v_a}{p}+\cdots +\frac{\kappa_{a-1} v_1}{p^a}\right)\\
&&\hspace{1cm} \mbox{}\times \int_{u/p^a}^{(u+1)/p^a}
\int_{v/p^a}^{(v+1)/p^a} |x-y| \rd x \rd y.
\end{eqnarray*}
We have the following equalities:
\[
    \int_{u/p^a}^{(u+1)/p^a} \int_{u/p^a}^{(u+1)/p^a} |x-y| \rd x \rd y = \frac{1}{3 p^{3a}}
\]
and for $u\neq v$, we have
\[
   \int_{u/p^a}^{(u+1)/p^a} \int_{v/p^a}^{(v+1)/p^a} |x-y| \rd x \rd y = \frac{|u-v|}{p^{3a}}.
\]
Thus
\begin{eqnarray}\label{tau_sum}
\tau_p(k)
&=& \frac{1}{3 p^{2a}\nonumber}\\
&& + \sum_{\satop{u,v=0}{u\neq v}}^{p^a-1}
{\rm e}\left(\frac{\kappa_0 (v_1-u_1)+\cdots+\kappa_{a-1} (v_a-u_a)}{p}+\cdots +\frac{\kappa_{a-1} (v_1-u_1)}{p^a}\right)
\frac{|u-v|}{p^{3a}}\nonumber \\
&=&  \frac{1}{3 p^{2a}}\nonumber\\
&& + \sum_{u=0}^{p^a-2} \sum_{v=u+1}^{p^a-1} \frac{v-u}{p^{3 a}}
{\rm e}\left(\frac{\kappa_0 (v_1-u_1)+\cdots+\kappa_{a-1} (v_a-u_a)}{p}+\cdots +\frac{\kappa_{a-1} (v_1-u_1)}{p^a}\right)\nonumber\\
&& + \sum_{v=0}^{p^a-2} \sum_{u=v+1}^{p^a-1} \frac{u-v}{p^{3 a}}
{\rm e}\left(\frac{\kappa_0 (v_1-u_1)+\cdots+\kappa_{a-1} (v_a-u_a)}{p}+\cdots +\frac{\kappa_{a-1} (v_1-u_1)}{p^a}\right)\nonumber\\
&=&  \frac{1}{3 p^{2a}}+ \frac{2}{p^{3a}} {\rm
Re}\left[\sum_{u=0}^{p^a-2} \sum_{v=u+1}^{p^a-1} \theta(u,v)
\right],
\end{eqnarray}
where ${\rm Re}[z]$ denotes the real part of a complex number $z$
and where
\begin{eqnarray*}
\theta(u,v) & := & (v-u)  {\rm e}\left(\frac{\kappa_0 (v_1-u_1)+\cdots+\kappa_{a-1} (v_a-u_a)}{p}+\cdots +\frac{\kappa_{a-1} (v_1-u_1)}{p^a}\right)\\
& = & (v-u)  {\rm e}\bigg((v_1-u_1)\left(\frac{\kappa_0}{p}+\frac{\kappa_1}{p^2}+\cdots+\frac{\kappa_{a-1}}{p^a}\right)\\
&&\hspace{0.5cm}
+(v_2-u_2)\left(\frac{\kappa_1}{p}+\frac{\kappa_2}{p^2}+\cdots+\frac{\kappa_{a-1}}{p^{a-1}}\right)+\cdots
+(v_a-u_a) \frac{\kappa_{a-1}}{p} \bigg).
\end{eqnarray*}
Assume now that $u' = u_1 p^{a-1}+ \cdots + u_{a-1} p$ and let $v' =
v_1 p^{a-1} + \cdots + v_{a-1} p$, where $v > u$. Observe that $u'$
and $v'$ are divisible by $p$. We have
\begin{equation}\label{tau_sum_0}
\left|\sum_{u_a=0}^{p-1} \sum_{v_a=0}^{p-1}
\theta(u'+u_a,v'+v_a)\right| =  \left|\sum_{u_a=0}^{p-1}
\sum_{v_a=0}^{p-1} (v_a-u_a) {\rm e}\left(\frac{\kappa_{a-1}
(v_a-u_a)}{p}\right)\right|,
\end{equation}
since $|{\rm e}(x)|=1$ and since $$\sum_{u_a=0}^{p-1} \sum_{v_a=0}^{p-1} (v-u)
{\rm e}\left(\frac{\kappa_{a-1} (v_a-u_a)}{p}\right) =0.$$ Now it
follows in the same way as in \cite[p.~372, Eq.~(12.12)]{DP10}
that the sum \eqref{tau_sum_0} is $0$.

Therefore most terms in the double sum in \eqref{tau_sum} cancel
out. We are left with the following terms: $\theta(u+u_a,u+v_a)$
for $u = 0,\ldots,p^a-p$, where $p | u$, and $0 \leq u_a < v_a
\leq p-1$. We have
\begin{eqnarray*}
\theta(u+u_a,u+v_a)= (u+v_a-u-u_a) {\rm e}\left(\frac{\kappa_{a-1}
(v_a-u_a)}{p}\right)= \theta(u_a,v_a).
\end{eqnarray*}
The sum over all remaining $\theta(u_a,v_a)$ can be calculated
using geometric series. By doing so we obtain
\begin{equation*}
\sum_{u_a=0}^{p-2} \sum_{v_a=u_a+1}^{p-1} \theta(u_a,v_a) =
-\frac{p}{2\sin^2(\kappa_{a-1}\pi/p)}.
\end{equation*}
Inserting in \eqref{tau_sum} gives
\begin{equation}\label{tau_value}
\tau_p(k)=  \frac{1}{3p^{2a}} - \frac{2}{p^{3a}}
\frac{p^a}{2\sin^2(\kappa_{a-1}\pi/p)} =  \frac{1}{p^{2a}}
\left(\frac{1}{3} - \frac{1}{\sin^2(\kappa_{a-1}\pi/p)} \right).
\end{equation}
In view of \eqref{khat1} we obtain
$$\widehat{K}_{\gamma}(k,k) =\frac{\gamma}{2 p^{2a}} \left(\frac{1}{\sin^2(\kappa_{a-1}\pi/p)}-\frac{1}{3}\right).$$

Now set $$r_{p,\gamma}(k):=\widehat{K}_{\gamma}(k,k) .$$ It can be
easily checked that $\sum_{k=0}^\infty r_{p,\gamma}(k) < \infty$.
Further, $K_{\gamma}$ is also continuous. Hence the result follows
from Lemma~\ref{lemshikernel}.
\end{proof}

\section*{Acknowledgements}

The authors would like to thank G.~Leobacher and I.~Pirsic for
comments and discussions.

\begin{small}
\noindent\textbf{Authors' addresses:}\\

\medskip

\noindent Peter Hellekalek\\
Fachbereich Mathematik, Universit\"{a}t Salzburg\\
Hellbrunnerstr.~34, 5020 Salzburg, Austria\\
E-mail: \texttt{peter.hellekalek@sbg.ac.at}.

\medskip

\noindent Peter Kritzer, Friedrich Pillichshammer\\
Institut f\"{u}r Finanzmathematik, Johannes Kepler Universit\"{a}t Linz\\
Altenbergerstr.~69, 4040 Linz, Austria\\
E-mail: \texttt{peter.kritzer@jku.at},
\texttt{friedrich.pillichshammer@jku.at}.
\end{small}

\end{document}